\newtheorem{theorem}{Theorem}[section]
\newtheorem{corollary}[theorem]{Corollary}
\newtheorem{lemma}[theorem]{Lemma}
\theoremstyle{definition}
\theoremstyle{remark}
\numberwithin{equation}{section}
\begin{document}
	
	\title[Difference of weighted composition operators]
	{Essential norm and Schatten class difference of weighted composition operators over the ball}
	
	\author[X. Hu]{Xiaohe Hu}
	\address[X. Hu]{School of Mathematics and Statistics, Henan Normal University, Xinxiang 453007, China.}
	\email{huxiaohe@htu.edu.cn}

	\author[Z. Yang]{Zicong Yang}
	\address[Z. Yang]{Institute of Mathematics, Hebei University of Technology, Tianjin 300401, China.}
	\email{zc25@hebut.edu.cn; zicongyang@126.com}

	\subjclass[2020]{30H20; 47B33.}
	\keywords{Bergman space, Weighted composition operator, Difference, Schatten class.}
	\thanks{Hu was supported in part by Natural Science Foundation of Henan Province (Grant No. 252300421778). Yang was supported in part by the Natural Science Foundation of Hebei Province (Grant No. A2020202031, A2020202037).}

\begin{abstract}
In this paper, we obtain the essential norm estimate for the difference of two weighted composition operators acting on standard weighted Bergman spaces over the unit ball. And we get some characterizations for the difference of weighted composition operators belonging to Schatten class, which has rarely been considered before. Our methods are fundamental, which involve Carleson measures and $R$-Berezin transform.
\end{abstract}
\maketitle

\section{Introduction}

Let $\mathbb{C}^n$ be the $n$-dimensional complex Euclidian space. For any two points $z=(z_1,\cdots, z_n)$ and $w=(w_1,\cdots,w_n)$ in $\mathbb{C}^n$, we write $\langle z,w\rangle=\sum_{i=1}^nz_i\cdot \overline{w}_i$ and $|z|=\langle z,z\rangle^{1/2}$. Let $\mathbb{B}_n$ denote the open unit ball in $\mathbb{C}^n$. For $\alpha>-1$, put
\[dv_{\alpha}(z):=c_{\alpha}(1-|z|^2)^{\alpha}dv(z),\]
where $dv$ is the normalized volume measure on $\mathbb{B}_n$ and $c_{\alpha}=\frac{\Gamma(n+1+\alpha)}{n!\Gamma(\alpha+1)}$ is chosen such that $v_{\alpha}(\mathbb{B}_n)=1$. Denote by $H(\mathbb{B}_n)$ the set of all holomorphic functions on $\mathbb{B}_n$. For $0<p<\infty$, the standard weighted Bergman space $A_{\alpha}^p(\mathbb{B}_n)$ consists of all $f\in H(\mathbb{B}_n)$ such that
\[\|f\|_{p}:=\left(\int_{\mathbb{B}_n}|f(z)|^pdv_{\alpha}(z)\right)^{1/p}\]
is finite. It is known that $A_{\alpha}^p(\mathbb{B}_n)$ is a Banach space for $1\leq p<\infty$. In particular, $A_{\alpha}^2(\mathbb{B}_n)$ is a Hilbert space with inner product
\[\langle f,g\rangle=\int_{\mathbb{B}_n}f(z)\overline{g(z)}dv_{\alpha}(z),\quad f,g\in A_{\alpha}^2(\mathbb{B}_n).\]
Here and in the sequel, we use the same symbol $\langle \cdot,\cdot\rangle$ to denote the inner product in $\mathbb{C}^n$ and $A_{\alpha}^2(\mathbb{B}_n)$, and the readers should have no trouble recognizing the meaning from the context.

Denote by $S(\mathbb{B}_n)$ the set of all holomorphic self-maps of $\mathbb{B}_n$. Given $u\in H(\mathbb{B}_n)$ and $\varphi\in S(\mathbb{B}_n)$, the weighted composition operator $C_{u,\varphi}$ on $H(\mathbb{B}_n)$ is defined by
\[C_{u,\varphi}f=u\cdot f\circ\varphi.\]
It is known that weighted composition operators are closely related to the isometries on classical Hardy or Bergman spaces. See for example \cite{Ff,Kc}. When $u=1$, it reduces to the composition operator $C_{\varphi}$. The relationship between the operator-theoretic properties of $C_{\varphi}$ and the function-theoretic properties of $\varphi$ has been studied extensively during the past several decades. One can refer to \cite{CcMb, Sj} for various aspects on the theory of composition operators.

Motivated by the study of isolation phenomena in the space of composition operators, see \cite{SjSc}, study on differences of (weighted) composition operators has been of growing interest. The effort to characterize compact difference of two composition operators $C_{\varphi}-C_{\psi}$ on standard weighted Bergman spaces was initiated by Moorhouse \cite{Mj} and then by Saukko \cite{Se1,Se2}. Koo and Wang \cite{KhWm} introduced the notion of joint-Carleson measure and obtained a Carleson measure characterization for the difference $C_{\varphi}-C_{\psi}$ on $A_{\alpha}^p(\mathbb{B}_n)$. In 2007, Acharyya and Wu \cite{AsWz} first obtained a compact criteria for the difference $C_{u,\varphi}-C_{v,\psi}$ on weighted Bergman spaces, where the weights $u,v$ satisfy a certain growth condition. Recently, Choe et al. \cite{CbCkKhYj1,CbCkKhYj2} completely characterized the boundedness and compactness of differences $C_{u,\varphi}-C_{v,\psi}$ between weighted Bergman spaces over the unit disk in terms of Carleson measures. And then, they extended the results to the ball setting, see \cite{CbCkKhPi}. For more results about the difference of (weighted) composition operators on various settings, one can see \cite{Cj,CbKhSw,CbKhWm,LbRjWf,Pi,YzZz} and references therein. Following this line of research, in this paper, we first obtain an essential norm estimate for the difference $C_{u,\varphi}-C_{v,\psi}$ on $A_{\alpha}^p(\mathbb{B}_n)$. Our methods to estimate the lower bound are new and simpler.

To state our main results, we first introduce several natations. For $\varphi,\psi\in S(\mathbb{B}_n)$, put
\[\rho(z)=d(\varphi(z),\psi(z)),\quad z\in\mathbb{B}_n,\]
where $d$ denotes the pseudo-hyperbolic distance on $\mathbb{B}_n$, see Section 2.1. Given a positive Borel measure $\mu$ on $\mathbb{B}_n$ and $\varphi\in S(\mathbb{B}_n)$, the pull-back measure $\mu\circ\varphi^{-1}$ is defined by 
\[\mu\circ\varphi^{-1}(E)=\mu(\varphi^{-1}(E))\]
for any Borel set $E\subset \mathbb{B}_n$. For $u,v\in H(\mathbb{B}_n)$, $\varphi,\psi\in S(\mathbb{B}_n)$ and $\beta>0$, we now define several pull-back measures as follows:
\begin{equation*}
	\omega_{\varphi,u}^p:=\left(|\rho u|^pdv_{\alpha}\right)\circ\varphi^{-1},\quad \omega_{\psi,v}^p:=\left(|\rho v|^pdv_{\alpha}\right)\circ\psi^{-1},
\end{equation*}
and 
\begin{equation*}
	\sigma_{\varphi,\beta}^p:=\left(|u-v|^p(1-\rho)^{\beta}dv_{\alpha}\right)\circ\varphi^{-1},\quad \sigma_{\psi,\beta}^{p}:=\left(|u-v|^p(1-\rho)^{\beta}dv_{\alpha}\right)\circ\psi^{-1}.
\end{equation*}
For simplicity, put
\[\omega_p:=\omega_{\varphi,u}^p+\omega_{\psi,v}^p \quad {\rm and}\quad \sigma_{\beta,p}:=\sigma_{\varphi,\beta}^p+\sigma_{\psi,\beta}^p\]
Given a positive Borel measure $\mu$ on $\mathbb{B}_n$ and $s\in (0,1)$, we put 
\[M_s(\mu)(z)=\frac{\mu(E(z,s))}{(1-|z|^2)^{n+1+\alpha}}\]
for the averaging function of $\mu$, where $E(z,s)$ is the pseudo-hyperbolic ball with center $z\in\mathbb{B}_n$ and radius $s\in (0,1)$, see Section 2.1.

Now our first main result of this paper is stated as follows.

\begin{theorem}\label{theorem1.1}
	Let $1<p<\infty$, $\alpha>-1$ and $\beta>n+1+\alpha$. Suppose $u,v\in H(\mathbb{B}_n)$, $\varphi,\psi\in S(\mathbb{B}_n)$ and $C_{u,\varphi}-C_{v,\psi}$ is bounded on $A_{\alpha}^p(\mathbb{B}_n)$, then 
	\begin{equation*}
		\|C_{u,\varphi}-C_{v,\psi}\|_{e}^p\simeq \limsup_{|z|\to 1}\left[M_{s}(\omega_p)(z)+M_s(\sigma_{\beta,p})(z)\right]
	\end{equation*}
	for some (or any) $s\in (0,1)$.
\end{theorem}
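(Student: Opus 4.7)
The plan is to establish matching upper and lower bounds; the authors signal that the lower bound is where their simplified method enters. For the \emph{upper bound}, the starting point is the algebraic identity
\[(C_{u,\varphi}-C_{v,\psi})f(z)=u(z)\bigl(f(\varphi(z))-f(\psi(z))\bigr)+(u(z)-v(z))f(\psi(z)).\]
Since the dilation operators $D_r f(z)=f(rz)$ are compact on $A_\alpha^p(\mathbb{B}_n)$ for every $r\in(0,1)$ and converge strongly to the identity as $r\to 1^-$, one has $\|C_{u,\varphi}-C_{v,\psi}\|_e\leq\liminf_{r\to 1^-}\|(C_{u,\varphi}-C_{v,\psi})(I-D_r)\|$, so it suffices to control the $L^p$ norms of the two summands above when applied to $f-D_r f$. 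For the first summand I would apply a holomorphic mean-value-type inequality $|f(\varphi(z))-f(\psi(z))|\lesssim\rho(z)F(z)$, where $F$ is a suitable holomorphic quantity comparable to averages of $|f|$; the resulting $p$-th power $L^p$ norm then becomes an integral of $|f|^p$ against the pullback measure $\omega_p$, and a Carleson embedding for $A_\alpha^p$ yields a bound by $\sup_{|z|>r'}M_s(\omega_p)(z)$ with $r'=r'(r)\to 1$. For the second summand, a domain splitting at $\{\rho\leq 1/2\}$ together with a Taylor-expansion argument inside the small-$\rho$ region introduces the weight $(1-\rho)^\beta$ and produces a parallel Carleson bound by $\sup_{|z|>r'}M_s(\sigma_{\beta,p})(z)$; the assumption $\beta>n+1+\alpha$ is precisely what guarantees local integrability of this auxiliary weight against $dv_\alpha$, so the estimates close. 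Letting $r\to 1^-$ converts the suprema into the required $\limsup$.

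For the \emph{lower bound}, choose a sequence $(w_k)\subset\mathbb{B}_n$ with $|w_k|\to 1$ that approximately realizes the $\limsup$ on the right-hand side. I would produce two families of normalized, weakly null test functions $g_k,h_k\in A_\alpha^p$ of reproducing-kernel type, such that
\[\bigl\|(C_{u,\varphi}-C_{v,\psi})g_k\bigr\|_p^p\gtrsim M_s(\sigma_{\beta,p})(w_k)\qquad\text{and}\qquad\bigl\|(C_{u,\varphi}-C_{v,\psi})h_k\bigr\|_p^p\gtrsim M_s(\omega_p)(w_k).\]
For the first inequality, a suitably normalized single reproducing kernel centered at $w_k$ should suffice: on the pseudo-hyperbolic ball $E(w_k,s)$ its modulus is essentially constant, and the pullback computation reduces directly to the definition of $\sigma_{\beta,p}$. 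For the second, the factor $\rho$ inside $\omega_p$ signals a required cancellation, and the simplified construction announced in the introduction is presumably to take $h_k$ as a linear combination of two reproducing kernels whose difference on $E(w_k,s)$ is bounded below by a multiple of $\rho(z)$ times the kernel size; this bypasses the derivative-of-kernel test functions commonly used in the literature while still extracting the $\rho$-weighted quantity. The conclusion then follows from the standard fact that compact operators map weakly null sequences to norm-null ones, which converts each of the displayed pointwise estimates into a lower bound on $\|C_{u,\varphi}-C_{v,\psi}\|_e$.

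The \emph{main obstacle} I expect is the construction of $h_k$: producing a test function that isolates $M_s(\omega_p)(w_k)$ without being dominated by the (often larger) $M_s(\sigma_{\beta,p})(w_k)$ contribution, and doing so cleanly in the simpler spirit promised by the authors. On the upper-bound side the corresponding subtle point is the precise interplay between the cutoff in $\rho$ and the Taylor remainder, for which the $R$-Berezin transform framework mentioned in the abstract is presumably the cleanest tool for verifying that the remainder is genuinely controlled by $\sigma_{\beta,p}$ and not by some coarser auxiliary measure. Once these two obstacles are resolved, the remainder of the argument is a routine assembly of Carleson embeddings, change-of-variable identities for the pullback measures, and weak-null arguments for compactness.
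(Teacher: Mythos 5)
Your upper-bound outline is essentially the paper's argument (dilations $T_mf(z)=f(t_mz)$, $R_m=I-T_m$, a splitting according to the level set $G_r=\{\rho<r\}$, the mean-value inequality of Lemma \ref{lemma2.1}, and the Carleson embedding \eqref{equa2.7}), but you misattribute the role of $\beta>n+1+\alpha$: no integrability of $(1-\rho)^\beta$ is ever at stake, since on $G_r$ one simply has $1-\rho\geq 1-r$, so $|u-v|^p\lesssim_{r,\beta}|u-v|^p(1-\rho)^\beta$ and the weight is introduced for free; the paper's upper bound (Theorem \ref{theorem3.1}) holds for all $0<p<\infty$ and all $\beta>0$. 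The restriction $\beta>n+1+\alpha$ is forced by the \emph{lower} bound, where the test functions only produce the weight $(1-\rho)^{Np}$ with $N>\frac{n+1+\alpha}{p}$. (Also, the $R$-Berezin transform plays no role in Theorem \ref{theorem1.1}; it is used only for the Schatten-class result.)

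The genuine gaps are in your lower bound. First, a single normalized kernel $g_k=f_{w_k,N,p}$ does \emph{not} "reduce directly to the definition of $\sigma_{\beta,p}$": on $\varphi^{-1}(E(w_k,s))$ the factor $g_k\circ\varphi$ is essentially constant, but $\psi(z)$ need not lie in $E(w_k,s)$, so what you actually get is
\begin{equation*}
\|(C_{u,\varphi}-C_{v,\psi})g_k\|_p^p\gtrsim\frac{1}{(1-|w_k|^2)^{n+1+\alpha}}\int_{\varphi^{-1}(E(w_k,s))}\Bigl|u-v\Bigl(\tfrac{1-\langle\varphi,w_k\rangle}{1-\langle\psi,w_k\rangle}\Bigr)^{N}\Bigr|^p\,dv_\alpha,
\end{equation*}
and passing from this to $|u-v|^p(1-\rho)^{Np}$ requires the triangle inequality $|u-v|(1-\rho)^{N}\lesssim|u-v(\cdots)^N|+|u|\bigl|\tfrac{\langle\varphi-\psi,w_k\rangle}{1-\langle\psi,w_k\rangle}\bigr|$, whose second term must be absorbed by the \emph{already established} $\omega$-type estimate. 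So the two halves of the lower bound are coupled and your first displayed inequality is not obtainable in isolation. Second, your construction of $h_k$ is left at the level of "presumably a linear combination of two kernels," and it misses the actual mechanism and the essential several-variables difficulty: the paper tests against the \emph{pair} $f_{a,N,p}$, $f_{a,N+1,p}$ (same center, consecutive exponents) and combines the two resulting integrals via a weighted triangle inequality to extract $|u|^p\bigl|\tfrac{\langle\psi-\varphi,a\rangle}{1-\langle\psi,\varphi\rangle}\bigr|^p$; but in $\mathbb{B}_n$ with $n>1$ this only captures the component of $\psi-\varphi$ along $a$, and one must repeat the argument at the shifted centers $b^j=a+s\sqrt{1-|a|^2}\,a^j$ and invoke Lemma \ref{lemma2.2} to reassemble the full pseudo-hyperbolic distance $\rho$. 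Without that step your $h_k$ cannot yield $M_s(\omega_p)(w_k)$ over the ball, only (at best) over the disk.
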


For $0<p<\infty$, we recall that a compact operator $T$ acting on a separated Hilbert space $\mathcal{H}$ belongs to the Schatten-$p$ class $S_p(\mathcal{H})$ if its sequence of singular numbers belongs to the sequence space $l^p$, where the singular numbers are the square roots of the eigenvalues of the positive operator $T^*T$. $T$ is said to be Hilbert-Schmidt if $T\in S_2(\mathcal{H})$. One can refer to \cite[Chapter 1]{Zk} for some details about Schatten class. Choe et al. \cite{CbHtKh} characterized the Hilbert-Schmidt property of the difference $C_{\varphi}-C_{\psi}$ on weighted Bergman spaces over the unit disk and showed that $C_{\varphi}$ and $C_{\psi}$ are in the same path component in the space of composition operators under the Hilbert-Schmidt norm if and only if $C_{\varphi}-C_{\psi}$ is Hilbert-Schmidt. This result was then extended to the ball setting by Zhang and Zhou \cite{ZlZz}. In \cite{AsWz}, Acharyya and Wu first characterized Hilbert-Schmidt difference of two weighted composition operators $C_{u,\varphi}-C_{v,\psi}$ acting from Bergman space or Hardy space to an $L^2(\mu)$ space. To the best of our knowledge, there are few papers studying the Schatten-$p$ class difference of weighted composition operators. So our second aim in this paper is to give some characterizations for $C_{u,\varphi}-C_{v,\psi}$ belonging to $S_p(A_{\alpha}^2)$.

Let 
\[d\lambda(z)=\frac{dv(z)}{(1-|z|^2)^{n+1}}\]
be the M$\rm\ddot{o}$bius invariant measure on $\mathbb{B}_n$. Our second main result of this paper is stated as follows. 

\begin{theorem}\label{theorem1.2}
	Let $\alpha>-1$ and $\beta\geq 2(n+1+\alpha)$. Suppose $u,v\in H(\mathbb{B}_n)$, $\varphi,\psi\in S(\mathbb{B}_n)$ and $C_{u,\varphi}-C_{v,\psi}$ is bounded on $A_{\alpha}^2(\mathbb{B}_n)$. If $M_s(\omega_2)+M_s(\sigma_{\beta,2})$ belongs to $L^{\frac{p}{2}}(\mathbb{B}_n,d\lambda)$ for some (or any) $s\in (0,1)$, then $C_{u,\varphi}-C_{v,\psi}\in S_p(A_{\alpha}^2)$. Moreover, when $p\geq 2$, the above condition is also necessary for $C_{u,\varphi}-C_{v,\psi}\in S_p(A_{\alpha}^2)$.
\end{theorem}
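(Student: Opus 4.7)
The plan is to reduce the Schatten class question for $T := C_{u,\varphi} - C_{v,\psi}$ to Luecking's classical characterization of Schatten-class positive Toeplitz operators on $A_\alpha^2(\mathbb{B}_n)$: for a positive Borel measure $\mu$, $T_\mu \in S_q(A_\alpha^2)$ iff $M_s(\mu) \in L^q(\mathbb{B}_n, d\lambda)$ for some (equivalently every) $s \in (0,1)$. Since $T \in S_p$ iff $T^*T \in S_{p/2}$, the strategy is to compare the quadratic form $f \mapsto \|Tf\|_{A_\alpha^2}^2$ with an integral against $\mu := \omega_2 + \sigma_{\beta,2}$, establishing domination in both directions (upper bound for sufficiency, lower bound via test functions for necessity).

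For sufficiency, I split $\mathbb{B}_n$ into the regions $\{\rho \geq 1/2\}$ and $\{\rho < 1/2\}$. On the first region, the crude bound $|uf\circ\varphi - vf\circ\psi|^2 \lesssim |u|^2|f\circ\varphi|^2 + |v|^2|f\circ\psi|^2 \lesssim \rho^2(|u|^2|f\circ\varphi|^2 + |v|^2|f\circ\psi|^2)$ changes variables to $\int|f|^2 d\omega_2$. On the second region, decompose $uf\circ\varphi - vf\circ\psi = u(f\circ\varphi - f\circ\psi) + (u-v)f\circ\psi$; the first summand is handled via the standard pointwise estimate $|f(\varphi(z))-f(\psi(z))|^2 \lesssim \rho(z)^2 \int_{E(\varphi(z),s)\cup E(\psi(z),s)} |f(w)|^2(1-|w|^2)^{-(n+1+\alpha)}dv_\alpha(w)$, Fubini, and change of variables, producing an integral controlled by $\int|f|^2 d\omega_2$; the second summand uses $(1-\rho)^\beta \gtrsim 1$ here and pulls back to $\int|f|^2 d\sigma_{\beta,2}$. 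Combining, $\langle T^*T f, f\rangle \lesssim \int|f|^2 d\mu$, so $T^*T$ is dominated in quadratic form by the Toeplitz operator $T_\mu$. Luecking's criterion and the hypothesis then give $T_\mu \in S_{p/2}$, hence $T \in S_p$.

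For necessity when $p \geq 2$, apply the standard lower Berezin-type bound for positive operators in Schatten class (see \cite[Chapter 1]{Zk}): since $T^*T$ is positive and $T^*T \in S_{p/2}$ with $p/2 \geq 1$,
\[ \|T\|_{S_p}^p = \|T^*T\|_{S_{p/2}}^{p/2} \gtrsim \int_{\mathbb{B}_n} \langle T^*T k_z, k_z\rangle^{p/2} d\lambda(z) = \int_{\mathbb{B}_n} \|Tk_z\|^p_{A_\alpha^2} d\lambda(z), \]
where $k_z$ is the normalized reproducing kernel. It then remains to bound $\|Tk_z\|^2$ from below by $M_s(\omega_2)(z) + M_s(\sigma_{\beta,2})(z)$ up to constants; this is a localized version of the lower bound already used for \fullref{theorem1.1}, implemented pointwise in $z$ rather than as $|z|\to 1$. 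The principal obstacle will be the sufficiency's two-regime analysis: calibrating the Bergman pointwise estimate against the $(1-\rho)^\beta$ weight so that the combined change-of-variables computation is dominated cleanly by $\omega_2$ and $\sigma_{\beta,2}$. The hypothesis $\beta \geq 2(n+1+\alpha)$ is tailored to twice the Bergman-kernel exponent $n+1+\alpha$ appearing in the $p=2$ reproducing-kernel Carleson estimates, and verifying that this balance extends to the full Luecking--Toeplitz comparison is the technical heart of the proof.
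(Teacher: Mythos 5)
Your sufficiency argument is essentially the paper's: dominate the quadratic form of $(C_{u,\varphi}-C_{v,\psi})^*(C_{u,\varphi}-C_{v,\psi})$ by that of a positive Toeplitz operator built from $\omega_2$ and $\sigma_{\beta,2}$ (after the two-regime split in $\rho$), then invoke the Luecking-type criterion $T_\mu\in S_{p/2}\Leftrightarrow M_s(\mu)\in L^{p/2}(d\lambda)$. The only cosmetic difference is that the pointwise oscillation estimate forces the averaged measure $M_s(\omega_2)\,dv_\alpha$ to appear, which the paper handles with its Lemma \ref{lemma2.6}; this is routine and your version is fine.

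The necessity direction, however, has a genuine gap. You reduce everything to $\int_{\mathbb{B}_n}\|(C_{u,\varphi}-C_{v,\psi})k_z\|_2^p\,d\lambda(z)<\infty$ and then assert that $\|(C_{u,\varphi}-C_{v,\psi})k_z\|_2^2\gtrsim M_s(\omega_2)(z)+M_s(\sigma_{\beta,2})(z)$ follows from ``a localized version of the lower bound already used for Theorem \ref{theorem1.1}.'' But that lower bound is not obtainable from the single family $k_z$. In the proof of \fullref{theorem3.2} the key cancellation uses \emph{two} families of test functions $f_{a,N,p}$ and $f_{a,N+1,p}$ with consecutive kernel exponents: writing $\lambda=\frac{1-\langle\varphi(\zeta),a\rangle}{1-\langle\psi(\zeta),a\rangle}$, one combines $|u-v\lambda^{N}|$ and $|u-v\lambda^{N+1}|$ to isolate $|u|\,|1-\lambda|\simeq|u|\,\bigl|\tfrac{\langle\psi-\varphi,a\rangle}{1-\langle\psi,a\rangle}\bigr|$, which is what eventually produces $|u|\rho$ and hence $M_s(\omega_{\varphi,u}^2)$. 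With only $k_z$ you get the single exponent $N=n+1+\alpha$, and $|u-v\lambda^{N}|$ alone controls neither $|u|\rho$ nor $|u-v|(1-\rho)^\beta$. Moreover, you cannot simply substitute an arbitrary second family $f_{z,N+1,2}$, because the trace-type inequality $\int\langle Se_z,e_z\rangle^{p/2}d\lambda\lesssim\|S\|_{S_{p/2}}^{p/2}$ is \emph{not} valid for arbitrary families of unit vectors; it holds for $k_z$ by the reproducing formula, and the paper manufactures the needed second admissible family as the normalized $R$-derivative kernels $k_z^{[R]}$ (whose kernel exponent is effectively $n+2+\alpha$), proving in Lemmas \ref{lemma2.4} and \ref{lemma2.5} that the $R$-Berezin transform of a positive $S_q$ operator lies in $L^q(d\lambda)$ via the norm equivalence $\|f\|_2^2\gtrsim\int|Rf|^2(1-|z|^2)^2dv_\alpha$. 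This $R$-Berezin device, together with the shifted points $\eta^j=z+s\sqrt{1-|z|^2}\,z^j$ and the change of variables $dv(\eta^j)\simeq dv(z)$ needed to capture the tangential components of $\varphi-\psi$ through Lemma \ref{lemma2.2}, is the actual technical heart of the necessity proof, and it is absent from your outline.
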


The paper is organized as follows. In Section 2, we collect some preliminary facts and auxiliary lemmas that will be used later. In particular, we introduce the notion of $R$-Berezin transform to describe Schatten class operators. Section 3 is devoted to the proof of Theorem 1.1, which is divided into two parts, see Theorem 3.1 and Theorem 3.2. The proof of Theorem 1.2 is presented in Section 4. Moreover, we obtain a Schatten class charaterization for the difference $C_{u,\varphi}-C_{v,\psi}$ via simple Reproducing Kernel Thesis.

Throughout the paper we use the same letter $C$ to denote positive constants which may vary at different occurrences but do not depend on the essential argument. For non-negative quantities $A$ and $B$, we write $A\lesssim B$ (or equivalent $B\gtrsim A$) if there exists an absolute constant $C>0$ such that $A\leq CB$. $A\simeq B$ means both $A\lesssim B$ and $B\lesssim A$. 

\section{Preliminaries}

In this section we recall some basic facts and present some auxiliary lemmas which will be used in the sequel.

\subsection{Pseudo-hyperbolic Distance}

Given $z\in\mathbb{B}_n$, let $\Phi_z$ be the involutive automorphism of $\mathbb{B}_n$ that exchange $0$ and $z$. More explicitly, 
\begin{equation*}
	\Phi_{z}(w)=\frac{z-P_z(w)}{1-\langle w,z\rangle}+\sqrt{1-|z|^2}\frac{P_z(w)-w}{1-\langle w,z\rangle},\quad w\in\mathbb{B}_n,
\end{equation*}
where $P_z$ is the orthogonal projection from $\mathbb{C}^n$ onto the one dimensional subspace generated by $z$. The pseudo-hyperbolic distance between $z,w\in\mathbb{C}^n$ is given by
\[d(z,w)=\left|\Phi_z(w)\right|.\]
It is easy to see that
\begin{equation}\label{equa2.1}
	1-d^2(z,w)=\frac{(1-|z|^2)(1-|w|^2)}{|1-\langle z,w\rangle|^2}.
\end{equation}
The pseudo-hyperbolic ball centered at $z\in\mathbb{B}_n$ with radius $s\in (0,1)$ is defined by
\[E(z,s)=\left\{w\in\mathbb{B}_n:d(z,w)<s\right\}.\]
Given $s\in (0,1)$, it is well-known that
\begin{equation}\label{equa2.2}
	v_{\alpha}(E(z,s))\simeq (1-|z|^2)^{n+1+\alpha},
\end{equation}
and 
\begin{equation}\label{equa2.3}
	1-|z|^2\simeq 1-|w|^2\simeq |1-\langle z,w\rangle|
\end{equation}
for all $z\in\mathbb{B}_n$ and $w\in E(z,s)$. Moreover,
\begin{equation}\label{equa2.4}
	\left|1-\langle z,a\rangle\right|\simeq \left|1-\langle w,a\rangle\right|
\end{equation}
for all $a$, $z$ and $w$ in $\mathbb{B}_n$ with $d(z,w)<s$. Here, all the constants suppressed in \eqref{equa2.2}, \eqref{equa2.3} and \eqref{equa2.4} depend only on $s$ and $\alpha$.

The following two lemmas are essential in the study of the difference of weighted composition operators, which can be found in \cite{KhWm} and \cite{TcYzZz}.

\begin{lemma}[\cite{KhWm}]\label{lemma2.1}
	Let $0<p<\infty$, $\alpha>-1$ and $0<r<s<1$. Then there exists a constant $C=C(\alpha,p,s,r)>0$ such that
	\begin{equation*}
		|f(z)-f(w)|^p\leq C\frac{d(z,w)^p}{(1-|z|^2)^{n+1+\alpha}}\int_{E(z,s)}|f(\zeta)|^pdv_{\alpha}(\zeta)
	\end{equation*}
	for all $z\in\mathbb{B}_n$, $w\in E(z,r)$ and $f\in H(\mathbb{B}_n)$.
\end{lemma}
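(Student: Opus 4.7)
The plan is to reduce the inequality to the case $z=0$ by pulling back along the involution $\Phi_z$, establish the estimate in this base case via Cauchy-type derivative bounds, and then change variables back to recover the claimed inequality with the prescribed $(1-|z|^2)^{n+1+\alpha}$ weight.

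First I would fix $z\in\mathbb{B}_n$ and set $g:=f\circ \Phi_z$, so that $g$ is holomorphic on $\mathbb{B}_n$, $g(0)=f(z)$, and $g(w')=f(w)$ with $w':=\Phi_z(w)$. Then $|w'|=d(z,w)<r$ by definition of the pseudo-hyperbolic distance, and $\Phi_z$ maps $E(0,s)$ bijectively onto $E(z,s)$. The problem thus reduces to controlling $|g(0)-g(w')|^p$ by $|w'|^p$ times an integral of $|g|^p$ on $E(0,s)$ and then transferring the resulting estimate to $f$.

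For the base case I would prove that for holomorphic $g$ on $\mathbb{B}_n$ and $|w'|\le r<s<1$,
\[
|g(0)-g(w')|^p \lesssim |w'|^p\int_{E(0,s)}|g(\eta)|^p\,dv_\alpha(\eta),
\]
with constants depending only on $\alpha,p,r,s$. The idea is to parametrize the segment from $0$ to $w'$ by $h(t):=g(tw')$, obtaining $|g(0)-g(w')|\le |w'|\sup_{|\eta|\le r}|\nabla g(\eta)|$. A Cauchy-type estimate on polydiscs of fixed radius that sit inside $E(0,s)$ then bounds $\sup_{|\eta|\le r}|\nabla g(\eta)|^p$ by a constant multiple of $\int_{E(0,s)}|g|^p\,dv_\alpha$; for $p<1$ one invokes the plurisubharmonicity of $|g|^p$ on each coordinate disc instead of H\"older's inequality.

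To finish, I would change variables in the base-case inequality via $\zeta=\Phi_z(\eta)$. The real Jacobian of $\Phi_z$, combined with \eqref{equa2.1}, gives
\[
dv_\alpha(\eta) = \frac{(1-|z|^2)^{n+1+\alpha}}{|1-\langle \zeta,z\rangle|^{2(n+1+\alpha)}}\,dv_\alpha(\zeta),
\]
and \eqref{equa2.3} yields $|1-\langle\zeta,z\rangle|\simeq 1-|z|^2$ uniformly for $\zeta\in E(z,s)$. Substituting these into the base-case bound and using $|w'|=d(z,w)$ produces
\[
|f(z)-f(w)|^p \lesssim \frac{d(z,w)^p}{(1-|z|^2)^{n+1+\alpha}}\int_{E(z,s)}|f(\zeta)|^p\,dv_\alpha(\zeta),
\]
as required. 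The main technical obstacle I anticipate is the derivative bound in the base case when $p<1$: there, the straightforward Cauchy-formula-plus-H\"older route breaks down and one must instead exploit plurisubharmonicity of $|g|^p$ on polydiscs of a size determined only by $r$ and $s$. Once this point is handled, the rest is routine unwinding of the M\"obius change of variables and the comparisons already recorded in Section 2.1.
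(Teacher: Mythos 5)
Your argument is correct: the paper does not prove this lemma but imports it from \cite{KhWm}, and your route (pull back by $\Phi_z$ so that $z=0$, bound $|g(0)-g(w')|$ by $|w'|\sup|\nabla g|$ via a Cauchy estimate together with the sub-mean-value property of the plurisubharmonic function $|g|^p$ on balls of radius comparable to $s-r$, then undo the M\"obius change of variables using the Jacobian identity and \eqref{equa2.3}) is essentially the standard proof given there. All the individual steps check out, including the Jacobian formula $dv_\alpha(\eta)=(1-|z|^2)^{n+1+\alpha}|1-\langle\zeta,z\rangle|^{-2(n+1+\alpha)}dv_\alpha(\zeta)$ and the handling of the case $0<p<1$ by subharmonicity rather than H\"older.
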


For $1\leq j\leq n$, let $e_j=(0,\dots,0,1,0,\dots,0)$, where $1$ is on the $j$-th component. For any $a\in\mathbb{B}_n\backslash \{0\}$, choose an unitary transform $U_a$ such that $U_aa=|a|e_1$. Denote by $$a^j=U_a^*(|a|e_j),\quad j=2,\dots,n.$$ 

\begin{lemma}[\cite{TcYzZz}]\label{lemma2.2}
	Let $s\in (0,1)$. There exists $0<t_0<1$ such that
	\begin{equation*}
		d(z,w)\simeq \frac{1}{|1-\langle z,w\rangle|}\left(\left|\langle z-w,a\rangle\right|+\sqrt{1-|a|}\sum_{j=2}^n|\langle z-w,a^j\rangle|\right)
	\end{equation*}
	for all $a\in\mathbb{B}_n$ with $t_0<|a|<1$, $z\in E(a,s)$ and $w\in\mathbb{B}_n$.
\end{lemma}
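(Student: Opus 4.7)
The plan is to derive a closed-form expression for $d(z,w)^2$ whose numerator splits naturally into a ``longitudinal'' piece involving $\langle z-w, z\rangle$ and a ``transversal'' piece $(1-|z|^2)|z-w|^2$, swap the reference point $z$ for $a$ using $z \in E(a,s)$, and finally decompose in the orthonormal basis $\{a/|a|, a^2/|a|, \ldots, a^n/|a|\}$, which is precisely the image of the standard basis of $\mathbb{C}^n$ under the unitary $U_a^*$.

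First I would compute $|\Phi_z(w)|^2$ directly from the explicit formula for $\Phi_z$. Since $z - P_z(w) = \frac{\langle z-w,z\rangle}{|z|^2} z$ is parallel to $z$ while $P_z(w) - w$ is orthogonal to $z$, the two summands defining $\Phi_z(w)$ are orthogonal, and Pythagoras gives
\[|\Phi_z(w)|^2 = \frac{|z - P_z(w)|^2 + (1-|z|^2)|w - P_z(w)|^2}{|1 - \langle w,z\rangle|^2}.\]
Using $|z - P_z(w)|^2 = |\langle z-w, z\rangle|^2 / |z|^2$ together with the orthogonal decomposition $|z-w|^2 = |z - P_z(w)|^2 + |w - P_z(w)|^2$ (to replace $|w - P_z(w)|^2$), the numerator telescopes to $|\langle z-w, z\rangle|^2 + (1-|z|^2)|z-w|^2$, yielding the key identity
\[d(z,w)^2 = \frac{|\langle z-w, z\rangle|^2 + (1-|z|^2)|z-w|^2}{|1-\langle z,w\rangle|^2}.\]

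Next I would harvest the geometric consequences of $z \in E(a,s)$ when $|a| > t_0$. The estimate \eqref{equa2.3} gives $1 - |z|^2 \simeq 1 - |a|$, and an explicit calculation of $\Phi_a(z)$ (performed in the rotated frame $a = |a| e_1$, which is legitimate by the unitary invariance of $d$) shows that $|z-a|^2 \lesssim 1-|a|$ whenever $t_0$ is chosen close enough to $1$. Consequently, substituting $z$ by $a$ in the numerator costs an error
\[|\langle z-w, z\rangle - \langle z-w, a\rangle|^2 = |\langle z-w, z-a\rangle|^2 \lesssim (1-|a|)|z-w|^2,\]
which is absorbed into the already-present $(1-|z|^2)|z-w|^2 \simeq (1-|a|)|z-w|^2$ term. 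This produces
\[d(z,w)^2 \, |1-\langle z,w\rangle|^2 \simeq |\langle z-w, a\rangle|^2 + (1-|a|)|z-w|^2.\]

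Finally, Parseval in the orthonormal basis $\{a/|a|\} \cup \{a^j/|a|\}_{j=2}^n$ gives $|a|^2 |z-w|^2 = |\langle z-w, a\rangle|^2 + \sum_{j=2}^n |\langle z-w, a^j\rangle|^2$. Plugging this in and using $|a| \simeq 1$, the term $(1-|a|)|\langle z-w, a\rangle|^2/|a|^2$ is of strictly lower order than $|\langle z-w, a\rangle|^2$ and may be absorbed, leaving
\[d(z,w)^2 \, |1-\langle z,w\rangle|^2 \simeq |\langle z-w, a\rangle|^2 + (1-|a|) \sum_{j=2}^n |\langle z-w, a^j\rangle|^2.\]
The elementary equivalences $X^2 + Y^2 \simeq (X+Y)^2$ for $X, Y \geq 0$ and $(\sum_j |c_j|)^2 \simeq \sum_j |c_j|^2$ for a bounded number of summands then convert the right-hand side into the square of the expression in the statement, and taking square roots concludes the proof. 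The main obstacle I anticipate is the careful bookkeeping in the swap $z \leftrightarrow a$: verifying that both the additive error $|\langle z-w, z-a\rangle|^2$ and the multiplicative deviation between $1-|z|^2$ and $1-|a|$ are genuinely absorbed is exactly what forces the choice of $t_0$, and one must ensure the same $t_0$ works uniformly in $s$.
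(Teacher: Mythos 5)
Your argument is correct and complete. Note first that this paper does not prove \fullref{lemma2.2} at all --- it is imported from \cite{TcYzZz} as a black box --- so there is no in-paper proof to compare against; I can only assess your proposal on its own terms, and it holds up. The key identity
\begin{equation*}
d(z,w)^2=\frac{|\langle z-w,z\rangle|^2+(1-|z|^2)|z-w|^2}{|1-\langle z,w\rangle|^2}
\end{equation*}
is verified exactly as you describe (the two summands in $\Phi_z(w)$ are orthogonal, and the cross terms telescope; one can also check it against \eqref{equa2.1} directly), and the three subsequent steps --- replacing $z$ by $a$ at cost $|\langle z-w,z-a\rangle|\lesssim\sqrt{1-|a|}\,|z-w|$, Parseval in the unitary frame $\{a/|a|\}\cup\{a^j/|a|\}_{j=2}^n$, and the equivalence of $\ell^1$ and $\ell^2$ norms on a fixed finite number of terms --- are each sound. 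Two small simplifications: the bound $|z-a|^2\lesssim 1-|a|$ for $z\in E(a,s)$ needs no separate computation of $\Phi_a(z)$ in a rotated frame, since it falls out of the same key identity applied to the pair $(a,z)$ together with $|1-\langle a,z\rangle|\simeq 1-|a|^2$ from \eqref{equa2.3}; and your closing worry about uniformity of $t_0$ in $s$ is moot, both because the lemma quantifies $s$ before $t_0$ (so $t_0$ may depend on $s$) and because in your argument $t_0$ is only needed to keep $|a|$ bounded away from $0$ so that $\tfrac{1-|a|}{|a|^2}\simeq 1-|a|$ --- any fixed $t_0\geq\tfrac12$ works for every $s$, with the implied constants depending on $s$ through \eqref{equa2.3}.
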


\subsection{Test Functions}

Given $\alpha>-1$ and $s\in (0,1)$, we recall the following pointwise estimate
\begin{equation}\label{equa2.5}
	|f(z)|^p\lesssim  \frac{1}{(1-|z|^2)^{n+1+\alpha}}\int_{E(z,s)}|f(\zeta)|^pdv_{\alpha}(\zeta),\quad z\in\mathbb{B}_n
\end{equation}
valid for functions $f$ in $A_{\alpha}^p(\mathbb{B}_n)$ and $0<p<\infty$, where constant suppressed depends only on $n$, $\alpha$ and $s$, see for example \cite[Lemma 2.24]{Zk1}.

Note from \eqref{equa2.5} that each point evaluation: $f\mapsto f(z)$ is a continuous linear functional on $A_{\alpha}^p(\mathbb{B}_n)$ for every $z\in\mathbb{D}$. When $p=2$, by the Riesz representation theorem in Hilbert space theory, to each $z\in\mathbb{B}_n$ corresponds a unique function $K_z\in A_{\alpha}^2(\mathbb{B}_n)$ such that
\begin{equation*}
	f(z)=\langle f,K_z\rangle,\quad \forall f\in A_{\alpha}^2(\mathbb{B}_n).
\end{equation*}
$K_z$ is called the reproducing kernel function in $A_{\alpha}^2(\mathbb{B}_n)$ at $z$, whose explicit formula is known as
\[K_z(w)=\frac{1}{(1-\langle w,z\rangle)^{n+1+\alpha}},\quad w\in\mathbb{B}_n.\]
A very important concept of the differentiation in several complex variables is that of the radial derivative, which is based on the usual partial derivatives of a holomorphic function. Specifically, we write 
\[Rf(z)=\sum_{j=1}^nz_j\frac{\partial f}{\partial z_j}(z),\quad z\in\mathbb{B}_n,\]
for the radial derivative of $f$.

For any $z\in\mathbb{B}_n$, let
\[K_z^{[R]}(w)=\sum_{j=1}^n\overline{z}_j\frac{\partial K_z}{\partial \overline{z}_j}(w),\quad w\in\mathbb{B}_n.\]
Then for any $f\in A_{\alpha}^2(\mathbb{B}_n)$, we have
\begin{equation}\label{equa2.6}
	Rf(z)=\langle f,K_z^{[R]}\rangle_2.
\end{equation}
It is known that $\|K_z\|_2^2= \frac{1}{(1-|z|^2)^{n+1+\alpha}}$ and 
\[\|K_z^{[R]}\|_{2}^2= (RK_z^{[R]})(z)\simeq \frac{1}{(1-|z|^2)^{n+3+\alpha}}.\]

Let $0<p<\infty$, $\alpha>-1$ and $N$ be a positive integer such that $N>\frac{n+1+\alpha}{p}$. For any $a\in\mathbb{B}_n$, let 
\[F_{a,N}(z)=\frac{1}{(1-\langle z,a\rangle)^{N}},\quad z\in\mathbb{B}_n.\]
By \cite[Theorem 1.12]{Zk1}, we know that $\|F_{a,N}\|_{p}\simeq (1-|a|^2)^{\frac{n+1+\alpha}{p}-N}$ and constant suppressed is independent of $a$. We set 
\[f_{a,N,p}(z)=(1-|a|^2)^{N-\frac{n+1+\alpha}{p}}F_{a,N}(z).\]
Then $\|f_{a,N,p}\|_p\simeq 1$ and $f_{a,N,p}$ converges to 0 uniformly on compact subsets of $\mathbb{B}_n$ as $|a|\to 1$.

\subsection{Compact operator and Schatten-$p$ class}

Given two complete metrizable topological vector spaces $X$ and $Y$, a bounded linear operator $T:X\to Y$ is said to be compact if the image of each bounded sequence in $X$ has a convergent subsequence in $Y$. 

The following lemma derives from \cite[Lemma 4.1]{Se1}, which is needed to obtain the lower bounds of essential norm estimates for the difference of two weighted composition operators.

\begin{lemma}\label{lemma2.3}
	Let $X$, $Y$ be two Banach spaces and $T:X\to Y$ be a bounded linear operator. Suppose $\{f_m\}_{m=1}^{\infty}$ is a sequence in $X$ such that $\|f_m\|_{X}=1$ for each $m\in\mathbb{N}$ and $f_m\to 0$ weakly as $m\to \infty$, then
	\[\|T\|_e\geq \limsup_{m\to\infty}\|Tf_m\|_Y.\]
\end{lemma}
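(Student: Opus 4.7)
The plan is to unwind the definition of the essential norm and combine it with the standard fact that compact operators send weakly convergent sequences to norm convergent sequences. Recall that
\[
\|T\|_e = \inf\bigl\{\|T-K\|_{X\to Y} : K:X\to Y \text{ is compact}\bigr\}.
\]
So it suffices to show that for every compact operator $K:X\to Y$,
\[
\|T-K\|_{X\to Y} \geq \limsup_{m\to\infty}\|Tf_m\|_Y,
\]
since taking the infimum over $K$ then yields the claim.

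First I would fix an arbitrary compact operator $K:X\to Y$ and establish the auxiliary statement $\|Kf_m\|_Y\to 0$. Since $f_m\to 0$ weakly and $K$ is bounded, $Kf_m\to 0$ weakly in $Y$. Given any subsequence $\{Kf_{m_k}\}$, compactness of $K$ together with boundedness $\|f_{m_k}\|_X=1$ yields a further subsequence converging in norm in $Y$; its limit must coincide with its weak limit, which is $0$. Since every subsequence has a sub-subsequence converging in norm to $0$, we conclude $\|Kf_m\|_Y\to 0$.

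Next, using $\|f_m\|_X = 1$, for each $m$ we have
\[
\|T-K\|_{X\to Y} \geq \|(T-K)f_m\|_Y \geq \|Tf_m\|_Y - \|Kf_m\|_Y.
\]
Taking $\limsup$ as $m\to\infty$ and invoking the auxiliary statement gives
\[
\|T-K\|_{X\to Y} \geq \limsup_{m\to\infty}\|Tf_m\|_Y.
\]
Finally I would take the infimum over all compact $K:X\to Y$ to conclude $\|T\|_e \geq \limsup_{m\to\infty}\|Tf_m\|_Y$.

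There is no real obstacle here; the only point to be careful about is the passage from weak convergence of $f_m$ to norm convergence of $Kf_m$, which is the only place compactness is used. The rest is triangle inequality and the definition of essential norm.
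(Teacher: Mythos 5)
Your proof is correct and is the standard argument for this fact; the paper itself gives no proof but simply cites \cite[Lemma 4.1]{Se1}, and your route (essential norm as an infimum over compact perturbations, plus the fact that compact operators send weakly null sequences to norm null sequences, plus the triangle inequality) is exactly the expected one.
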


For any $z\in\mathbb{D}$, let $k_z=K_z/\|K_z\|_{2}$. Suppose $T$ is a bounded linear operator on $A_{\alpha}^2(\mathbb{B}_n)$, recall that the Berezin  transform of $T$ is defined by 
\[\widetilde{T}(z)=\langle Tk_z, k_z\rangle,\quad z\in\mathbb{B}_n.\]
Similarly, we can define a class of $R$-Berezin transform $\widetilde{T}_{[R]}$ as follows:
\[\widetilde{T}_{[R]}(z)=\langle Tk_z^{[R]},k_z^{[R]}\rangle,\quad z\in\mathbb{B}_n,\]
where $k_z^{[R]}=K_z^{[R]}/\|K_z^{[R]}\|_2$.

\begin{lemma}\label{lemma2.4}
	Suppose $T$ is a positive operator on $A_{\alpha}^2(\mathbb{B}_n)$. If $T$ belongs to the trace class, then $\widetilde{T}_{[R]}\in L^1(\mathbb{B}_n,d\lambda)$ and $\|T\|_{S_1}\gtrsim \|\widetilde{T}_{[R]}\|_{L^1(\mathbb{B}_n,d\lambda)}$.
\end{lemma}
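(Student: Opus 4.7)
The plan is to combine the spectral decomposition of a positive trace class operator with the standard radial derivative characterization of the $A_{\alpha}^{2}$ norm. Since $T\ge 0$ belongs to $S_1(A_{\alpha}^{2})$, there is an orthonormal sequence $\{e_k\}\subset A_{\alpha}^{2}(\mathbb{B}_n)$ and non-negative eigenvalues $\{\lambda_k\}$ with $\|T\|_{S_1}=\sum_k\lambda_k<\infty$ such that $Tf=\sum_k\lambda_k\langle f,e_k\rangle\,e_k$ for every $f\in A_{\alpha}^{2}(\mathbb{B}_n)$. Using this decomposition together with the reproducing identity \eqref{equa2.6}, I would first expand
\[
\widetilde{T}_{[R]}(z)=\frac{\langle TK_z^{[R]},K_z^{[R]}\rangle}{\|K_z^{[R]}\|_2^2}=\frac{1}{\|K_z^{[R]}\|_2^2}\sum_k\lambda_k\bigl|Re_k(z)\bigr|^2.
\]

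Since every term is non-negative, Tonelli's theorem permits exchanging sum and integration against $d\lambda$. The kernel asymptotic $\|K_z^{[R]}\|_2^2\simeq (1-|z|^2)^{-(n+3+\alpha)}$ recorded before the lemma, together with $d\lambda(z)=(1-|z|^2)^{-(n+1)}\,dv(z)$, combine the weights into $(1-|z|^2)^{\alpha+2}\,dv(z)$, giving
\[
\int_{\mathbb{B}_n}\widetilde{T}_{[R]}(z)\,d\lambda(z)\simeq \sum_k\lambda_k\int_{\mathbb{B}_n}|Re_k(z)|^2(1-|z|^2)^{\alpha+2}\,dv(z).
\]
To close, I would invoke the classical radial derivative characterization of the Bergman norm
\[
\int_{\mathbb{B}_n}|Rf(z)|^2(1-|z|^2)^{\alpha+2}\,dv(z)\lesssim \|f\|_{2}^{2},\qquad f\in A_{\alpha}^{2}(\mathbb{B}_n),
\]
available for instance in Zhu \cite{Zk1}. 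Applied to each unit vector $e_k$, this yields a bound on the inner integral that is uniform in $k$, and hence $\int_{\mathbb{B}_n}\widetilde{T}_{[R]}\,d\lambda\lesssim \sum_k\lambda_k=\|T\|_{S_1}$; this simultaneously delivers membership $\widetilde{T}_{[R]}\in L^{1}(\mathbb{B}_n,d\lambda)$ and the stated inequality.

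There is no genuine obstacle; the argument is essentially the usual Berezin transform trick adapted to the $R$-Berezin setting. The only delicate point is checking that the weight arising from $d\lambda/\|K_z^{[R]}\|_2^{2}$ matches exactly the weight $(1-|z|^2)^{\alpha+2}\,dv$ appearing in the radial derivative characterization, which is precisely what the asymptotic $\|K_z^{[R]}\|_2^{2}\simeq (1-|z|^2)^{-(n+3+\alpha)}$ guarantees.
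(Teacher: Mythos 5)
Your proof is correct and follows essentially the same route as the paper: both reduce the claim to the radial-derivative characterization $\int_{\mathbb{B}_n}|Rf|^2(1-|z|^2)^2\,dv_{\alpha}\lesssim\|f\|_2^2$, the reproducing identity \eqref{equa2.6}, Tonelli, and the kernel asymptotic $\|K_z^{[R]}\|_2^2\simeq(1-|z|^2)^{-(n+3+\alpha)}$. The only cosmetic difference is that you diagonalize $T$ via its spectral decomposition, whereas the paper works with an arbitrary orthonormal basis and $S=\sqrt{T}$, recombining terms by Parseval into $\|SK_z^{[R]}\|_2^2=\langle TK_z^{[R]},K_z^{[R]}\rangle$.
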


\begin{proof}
	Fix an orthonormal basis $\{e_j\}$ for $A_{\alpha}^2(\mathbb{B}_n)$. Since $T$ is positive, it belongs to the trace class if and only if 
	\[\sum_{j=1}^{\infty}\langle Te_j,e_j\rangle<\infty.\]
	
	On the other hand, let $S=\sqrt{T}$. By Fubini's Theorem, \cite[Theorem 2.16]{Zk1} and the reproducing formula \eqref{equa2.6}, we obtain
	\begin{equation*}
		\begin{split}
			\|T\|_{S_1}&=\sum_{j=1}^{\infty}\langle Te_j,e_j\rangle=\sum_{j=1}^{\infty}\|Se_j\|_{2}^2\\
			&\gtrsim \sum_{j=1}^{\infty}\int_{\mathbb{B}_n}\left|R(Se_j)(z)\right|^2(1-|z|^2)^{2}dv_{\alpha}(z)\\
			&=\int_{\mathbb{B}_n}\left(\sum_{j=1}^{\infty}\langle Se_j,K_z^{[R]}\rangle^2\right)(1-|z|^2)^{2}dv_{\alpha}(z)\\
			&=\int_{\mathbb{B}_n}\left(\sum_{j=1}^{\infty}\langle e_j, SK_z^{[R]}\rangle^2\right)(1-|z|^2)^{2}dv_{\alpha}(z)\\
			&=\int_{\mathbb{B}_n}\|SK_z^{[R]}\|_{2}^2(1-|z|^2)^{2}dv_{\alpha}(z)\\
			&=\int_{\mathbb{B}_n}\langle TK_z^{[R]}, K_z^{[R]}\rangle(1-|z|^2)^{2}dv_{\alpha}(z)\\
			&\simeq \int_{\mathbb{B}_n}\widetilde{T}_{[R]}(z) d\lambda(z).
		\end{split}
	\end{equation*}
	Therefore, trace class of $T$ implies that $\widetilde{T}_{[R]}\in L^1(\mathbb{B}_n,d\lambda)$. The proof is complete.
\end{proof}

\begin{lemma}\label{lemma2.5}
	Suppose $T$ is a positive operator on $A_{\alpha}^2(\mathbb{B}_n)$ and $1\leq p<\infty$. If $T\in S_p(A_{\alpha}^2)$, then $\widetilde{T}\in L^p(\mathbb{B}_n,d\lambda)$ and $\widetilde{T}_{[R]}\in L^{p}(\mathbb{B}_n,d\lambda)$. Moreover, $\|T\|_{S_p}\gtrsim \|\widetilde{T}\|_{L^p(\mathbb{B}_n,d\lambda)}+ \|\widetilde{T}_{[R]}\|_{L^p(\mathbb{B}_n,d\lambda)}$.
\end{lemma}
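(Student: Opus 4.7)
The plan is to diagonalize the positive compact operator $T$ via the spectral theorem and then reduce both Berezin-type bounds to a single pointwise Jensen inequality. Since $T\in S_p(A_\alpha^2)$ is positive and in particular compact, we may write $T=\sum_k \lambda_k\,\langle\cdot,u_k\rangle u_k$, where $\{u_k\}$ is an orthonormal system of eigenvectors and $\sum_k \lambda_k^p=\|T\|_{S_p}^p$. Extending $\{u_k\}$ to a full orthonormal basis of $A_\alpha^2(\mathbb{B}_n)$ (padding with zero eigenvalues if necessary) does not change $T$, and Parseval yields the pointwise expansions
\begin{equation*}
\widetilde{T}(z)=\sum_k \lambda_k\,|\langle k_z,u_k\rangle|^2,\qquad \widetilde{T}_{[R]}(z)=\sum_k \lambda_k\,|\langle k_z^{[R]},u_k\rangle|^2,
\end{equation*}
together with the key normalizations $\sum_k |\langle k_z,u_k\rangle|^2=\|k_z\|_2^2=1$ and its $R$-analogue.

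Thus, for each fixed $z$, both $\widetilde{T}(z)$ and $\widetilde{T}_{[R]}(z)$ are convex combinations of the non-negative numbers $\{\lambda_k\}$. Applying Jensen's inequality to the convex map $t\mapsto t^p$ on $[0,\infty)$ produces the pointwise bounds
\begin{equation*}
\widetilde{T}(z)^p\le \sum_k \lambda_k^p\,|\langle k_z,u_k\rangle|^2,\qquad \widetilde{T}_{[R]}(z)^p\le \sum_k \lambda_k^p\,|\langle k_z^{[R]},u_k\rangle|^2.
\end{equation*}
Integrating against $d\lambda$ and interchanging sum and integral by Tonelli (all terms are non-negative) reduces the proof to two uniform-in-$k$ estimates:
\begin{equation*}
\int_{\mathbb{B}_n}|\langle k_z,u_k\rangle|^2\,d\lambda(z)\lesssim 1,\qquad \int_{\mathbb{B}_n}|\langle k_z^{[R]},u_k\rangle|^2\,d\lambda(z)\lesssim 1.
\end{equation*}

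For the first of these, using $\|K_z\|_2^2=(1-|z|^2)^{-(n+1+\alpha)}$ and the definition of $d\lambda$, the integral collapses to $\int_{\mathbb{B}_n}|u_k(z)|^2(1-|z|^2)^\alpha dv(z)\simeq \|u_k\|_2^2=1$. For the second, the reproducing formula \eqref{equa2.6} gives $\langle u_k,K_z^{[R]}\rangle=Ru_k(z)$, and combined with $\|K_z^{[R]}\|_2^2\simeq (1-|z|^2)^{-(n+3+\alpha)}$ the integrand reduces to $\simeq |Ru_k(z)|^2(1-|z|^2)^{2+\alpha}$, whose integral over $\mathbb{B}_n$ is controlled by $\|u_k\|_2^2=1$ via the standard radial-derivative equivalence of the Bergman norm already invoked in the proof of \fullref{lemma2.4}. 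Summing over $k$ then yields both $L^p(d\lambda)$-bounds by a constant multiple of $\|T\|_{S_p}^p$. I do not foresee a serious obstacle: the only place positivity of $T$ is essential is in making $\{|\langle k_z,u_k\rangle|^2\}_k$ a genuine probability distribution, which is what makes the Jensen step work; everything afterward is a routine reproducing-kernel and Bergman-norm calculation.
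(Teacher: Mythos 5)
Your proof is correct, and it reaches the conclusion by a more self-contained route than the paper. The paper splits the two claims: for $\widetilde{T}\in L^p(d\lambda)$ it simply cites a lemma of Pau, and for $\widetilde{T}_{[R]}$ it invokes the operator-level Jensen inequality $(\widetilde{T^p})_{[R]}\geq(\widetilde{T}_{[R]})^p$ from Zhu's book, notes that $T\in S_p$ makes $T^p$ trace class, and then applies \fullref{lemma2.4} to $T^p$. You instead diagonalize $T$ once and for all, observe that both $\widetilde{T}(z)$ and $\widetilde{T}_{[R]}(z)$ are convex combinations of the eigenvalues (after padding the eigenbasis so that Parseval gives total weight $1$ --- a point you correctly flag as the only place positivity and compactness are used), and apply scalar Jensen; the remaining integrals $\int|\langle k_z,u_k\rangle|^2\,d\lambda$ and $\int|\langle k_z^{[R]},u_k\rangle|^2\,d\lambda$ collapse to $c_\alpha^{-1}\|u_k\|_2^2$ and to $\int|Ru_k|^2(1-|z|^2)^2\,dv_\alpha\lesssim\|u_k\|_2^2$ respectively, the latter being exactly the radial-derivative estimate already used in the proof of \fullref{lemma2.4}. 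In effect you have unwound both citations: the operator Jensen inequality \emph{is} your spectral-plus-scalar-Jensen step, and the trace computation in \fullref{lemma2.4} \emph{is} your uniform bound on $\int|\langle k_z^{[R]},u_k\rangle|^2\,d\lambda$. What your version buys is a single uniform mechanism handling $\widetilde{T}$ and $\widetilde{T}_{[R]}$ simultaneously with no external references; what the paper's version buys is brevity and reuse of \fullref{lemma2.4}. All steps (Tonelli for the sum--integral interchange, the normalization $\|k_z\|_2=\|k_z^{[R]}\|_2=1$, the direction of the Bergman-norm equivalence) check out.
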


\begin{proof}
	Assume $T$ belongs to $S_p(A_{\alpha}^2)$. Firstly, it follows from \cite[Lemma 2]{Pj} that $\widetilde{T}\in L^p(\mathbb{B}_n,d\lambda)$. Then by \cite[Proposition 1.31]{Zk}, we have
	\[(\widetilde{T^p})_{[R]}(z)\geq \left(\widetilde{T}_{[R]}(z)\right)^p,\quad z\in\mathbb{B}_n.\]
	Since $T$ is positive, $T\in S_p$ implies that $T^p$ is in the trace class. So it follows from Lemma \ref{lemma2.4} that 
	\[\int_{\mathbb{B}_n}\left(\widetilde{T}_{[R]}(z)\right)^pd\lambda(z)\leq \int_{\mathbb{B}_n}(\widetilde{T^p})_{[R]}(z)d\lambda(z)\lesssim \|T^p\|_{S_1}=\|T\|_{S_p}<\infty.\]
	This completes the proof.
\end{proof}

\subsection{Carleson Measure and Toeplitz Operator}

Let $\mu$ be a positive Borel measure on $\mathbb{B}_n$. For $\alpha>-1$ and $0<p<\infty$, we say that $\mu$ is a Carleson measure for $A_{\alpha}^p(\mathbb{B}_n)$ if there exists a constant $C>0$ such that
\[\left(\int_{\mathbb{B}_n}|f(z)|^pd\mu(z)\right)^{1/p}\leq C\|f\|_{p}\]
for all $f\in A_{\alpha}^p(\mathbb{B}_n)$. That is, $\mu$ is a Carleson measure for $A_{\alpha}^p(\mathbb{B}_n)$ if the embedding $A_{\alpha}^p(\mathbb{B}_n)\subset L^p(\mathbb{B}_n,d\mu)$ is continuous. If, in addition, the embedding $A_{\alpha}^p(\mathbb{B}_n)\subset L^p(\mathbb{B}_n,d\mu)$ is compact, then $\mu$ is called a compact Carleson measure for $A_{\alpha}^p(\mathbb{B}_n)$. Note that Carleson measures are finite.

Given a positive Borel measure $\mu$ on $\mathbb{B}_n$ and $s\in (0,1)$, recall that the averaging function of $\mu$ is defined by 
\[M_{s}(\mu)(z)=\frac{\mu(E(z,s))}{(1-|z|^2)^{n+1+\alpha}},\quad z\in\mathbb{B}_n.\]
It is known by \cite[Theorem 50]{ZrZk} that $\mu$ is a Carleson measure for $A_{\alpha}^p(\mathbb{B}_n)$ if and only if
\[\sup_{z\in\mathbb{B}_n}M_s(\mu)(z)<\infty\]
for some (or any) $s\in (0,1)$. And $\mu$ is a compact Carleson measure for $A_{\alpha}^p(\mathbb{B}_n)$ if and only if
\[\lim_{|z|\to 1^{-}}M_s(\mu)(z)=0\]
for some (or any) $s\in (0,1)$. Moreover, characterizations of Carleson measures are independent of $p$.

For a function $f\in A_{\alpha}^p(\mathbb{B}_n)$ and $s\in (0,1)$, by \eqref{equa2.5} and Fubini's Theorem, it is easy to see that
\begin{equation}\label{equa2.7}
	\int_{\mathbb{B}_n}|f(z)|^pd\mu(z)\lesssim \int_{\mathbb{B}_n}|f(w)|^pM_{s}(\mu)(w)dv_{\alpha}(w).
\end{equation}

Given a positive Borel measure $\mu$ on $\mathbb{B}_n$, the Toeplitz operator $T_{\mu}$ acting on $A_{\alpha}^2(\mathbb{B}_n)$ is the densely defined integral operator
\[T_{\mu}f(z)=\int_{\mathbb{D}}\frac{f(w)}{(1-\langle w,z\rangle)^{n+1+\alpha}}d\mu(w), \quad z\in\mathbb{D}.\]
Clearly, $T_{\mu}$ is well defined on $H^{\infty}(\mathbb{B}_n)$, the space of all bounded analytic functions in $\mathbb{D}$. 
According to \cite[Theorem 17]{Zk2}, we know that $T_{\mu}$ is bounded (compact, resp.) on $A_{\alpha}^2(\mathbb{B}_n)$ if and only if $\mu$ is a (compact, resp.) Carleson measure. And for $0<p<\infty$, $T_{\mu}$ is in $S_p(A_{\alpha}^2)$ if and only if 
\[M_s(\mu)\in L^{p}(\mathbb{B}_n,d\lambda)\] for some (or any) $0<s<1$ and $\|T_{\mu}\|_{S_p}\simeq \|M_s(\mu)\|_{L^p(\mathbb{D},d\lambda)}$.

\begin{lemma}\label{lemma2.6}
	Suppose $\mu$ is a compact Carleson measure and $0<s<r<1$. Let $d\mu_1=M_s(\mu)dv_{\alpha}$. If $M_{r}(\mu)\in L^p(\mathbb{B}_n,d\lambda)$, then $T_{\mu_1}$ is in $S_p(A_{\alpha}^2)$ and $\|T_{\mu_1}\|_{S_p}\lesssim \|M_{r}(\mu)\|_{L^p(\mathbb{B}_n,d\lambda)}$.
\end{lemma}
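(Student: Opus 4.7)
The plan is to reduce the statement to the Schatten-class characterization of Toeplitz operators recalled just before the lemma: since that characterization gives $\|T_{\mu_1}\|_{S_p}\simeq \|M_t(\mu_1)\|_{L^p(\mathbb{B}_n,d\lambda)}$ for some (or any) $t\in(0,1)$, it suffices to exhibit one value of $t$ for which
\begin{equation*}
M_t(\mu_1)(z)\lesssim M_r(\mu)(z),\qquad z\in\mathbb{B}_n,
\end{equation*}
after which raising to the $p$-th power and integrating against $d\lambda$ finishes the proof. Note that since $\mu$ is a compact Carleson measure, $M_s(\mu)$ is bounded on $\mathbb{B}_n$, so $\mu_1=M_s(\mu)\,dv_\alpha$ is itself a (compact) Carleson measure and $T_{\mu_1}$ is at least well-defined and bounded; the Schatten estimate for $\mu_1$ is then the nontrivial content.

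First I would choose the radius $t\in(0,1)$. Using the fact that pseudo-hyperbolic distance is a Möbius-invariant metric and $s<r$, one can pick $t$ so small (depending only on $s$ and $r$) that whenever $w\in E(z,t)$ one has $E(w,s)\subset E(z,r)$. With this choice in hand, I would expand the averaging function by definition:
\begin{equation*}
M_t(\mu_1)(z)=\frac{1}{(1-|z|^2)^{n+1+\alpha}}\int_{E(z,t)}M_s(\mu)(w)\,dv_\alpha(w),
\end{equation*}
and, inside the integral, use \eqref{equa2.3} to replace $(1-|w|^2)^{n+1+\alpha}$ by $(1-|z|^2)^{n+1+\alpha}$ up to a constant, together with the containment $E(w,s)\subset E(z,r)$ to estimate $\mu(E(w,s))\leq \mu(E(z,r))$. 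This yields
\begin{equation*}
M_s(\mu)(w)\lesssim \frac{\mu(E(z,r))}{(1-|z|^2)^{n+1+\alpha}}=M_r(\mu)(z),\qquad w\in E(z,t).
\end{equation*}

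Substituting this bound and using \eqref{equa2.2} in the form $v_\alpha(E(z,t))\simeq (1-|z|^2)^{n+1+\alpha}$ gives the desired pointwise estimate $M_t(\mu_1)(z)\lesssim M_r(\mu)(z)$. Integration against $d\lambda$ then produces
\begin{equation*}
\|M_t(\mu_1)\|_{L^p(\mathbb{B}_n,d\lambda)}\lesssim \|M_r(\mu)\|_{L^p(\mathbb{B}_n,d\lambda)},
\end{equation*}
so the hypothesis $M_r(\mu)\in L^p(\mathbb{B}_n,d\lambda)$ places $M_t(\mu_1)$ in the same space. Invoking the Schatten-class Toeplitz criterion from \cite{Zk2} cited in Section 2.4 then yields $T_{\mu_1}\in S_p(A_\alpha^2)$ with $\|T_{\mu_1}\|_{S_p}\lesssim \|M_r(\mu)\|_{L^p(\mathbb{B}_n,d\lambda)}$.

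The only delicate step is the geometric one: selecting $t$ so that $E(w,s)\subset E(z,r)$ for every $w\in E(z,t)$. This relies on the Möbius-invariant form of the triangle inequality for $d$ (or equivalently on the uniform behavior of pseudo-hyperbolic balls under small perturbations recorded in \eqref{equa2.3}--\eqref{equa2.4}). Once this is settled, everything else is a direct computation using the standard volume estimate \eqref{equa2.2} and the comparability of $1-|w|^2$ with $1-|z|^2$ inside $E(z,t)$.
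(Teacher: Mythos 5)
Your argument is correct and is essentially the paper's own proof: the paper simply takes $t=r-s$ (so that the triangle inequality for the pseudo-hyperbolic metric gives $E(w,s)\subset E(z,r)$ for $w\in E(z,r-s)$), derives the pointwise bound $M_{r-s}(\mu_1)\lesssim M_r(\mu)$ from \eqref{equa2.2}--\eqref{equa2.3}, and invokes the Schatten-class Toeplitz criterion exactly as you do. Your version just leaves the radius $t$ implicit and spells out the details the paper compresses into ``it is easy to see.''
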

\begin{proof}
	By \eqref{equa2.3}, it is easy to see that $M_{r-s}(\mu_1)\lesssim M_{r}(\mu)$. So $M_{r-s}(\mu_1)\in L^p(\mathbb{B}_n,d\lambda)$, and then $T_{\mu_1}\in S_p(A_{\alpha}^2)$. Moreover, 
	$$\|T_{\mu_1}\|_{S_p}\simeq \|M_{r-s}(\mu_1)\|_{L^p(\mathbb{B}_n,d\lambda)}\lesssim \|M_r(\mu)\|_{L^p(\mathbb{B}_n,d\lambda)}.$$
	The proof is complete.
\end{proof}

\section{Proof of Theorem 1.1}

In this section, we provide the proof of Theorem 1.1, which gives the essential norm estimates for the difference of two weighted composition operators on $A_{\alpha}^p(\mathbb{B}_n)$.

\subsection{Upper Bounds}

Let $\{t_m\}_{m=1}^{\infty}$ be a fixed real sequence in $(0,1)$ such that $t_1<t_2<\cdots$ and $t_m\to 1$ as $m\to\infty$. For $m\in\mathbb{N}$, define
\[(T_mf)(z):=f(t_mz),\quad f\in H(\mathbb{B}_n),~~z\in\mathbb{B}_n.\]
It is easy to see that each $T_m$ is bounded on $A_{\alpha}^p(\mathbb{B}_n)$ and $\|T_m\|=1$. Moreover, $T_m$ is compact for each $m$. If we set $R_m=I-T_m$, then $R_m$ is also bounded on $A_{\alpha}^p(\mathbb{B}_n)$ and $\|R_m\|\leq 2$.

Now we provide the upper estimate for the essential norm of the operator $C_{u,\varphi}-C_{v,\psi}$.

\begin{theorem}\label{theorem3.1}
	Let $0<p<\infty$ and $\alpha>-1$. Suppose $u,v\in H(\mathbb{B}_n)$, $\varphi,\psi\in S(\mathbb{B}_n)$ and $C_{u,\varphi}-C_{v,\psi}$ is bounded on $A_{\alpha}^p(\mathbb{B}_n)$. Then 
	\begin{equation*}
		\|C_{u,\varphi}-C_{v,\psi}\|_e^p\lesssim \lim_{\delta\to 1^-}\sup_{|z|>\delta}\left[M_{s}(\omega_p)(z)+M_s(\sigma_{\beta,p})(z)\right]
	\end{equation*}
	for some (or any) $s\in (0,1)$.
\end{theorem}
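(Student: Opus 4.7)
The plan is the standard compact-approximation scheme. Since $T_m f(z) = f(t_m z)$ is holomorphic on the strictly larger ball $(1/t_m)\mathbb{B}_n$, a normal-family argument shows each $T_m$ is compact on $A_\alpha^p(\mathbb{B}_n)$; hence $(C_{u,\varphi}-C_{v,\psi})T_m$ is compact and
\begin{equation*}
\|C_{u,\varphi}-C_{v,\psi}\|_e^p \leq \|(C_{u,\varphi}-C_{v,\psi})R_m\|^p
\end{equation*}
for every $m$. I will then show that, as $m\to\infty$ and $\delta\to 1^-$, the right-hand side is controlled by $\sup_{|\zeta|>\delta}[M_s(\omega_p)+M_s(\sigma_{\beta,p})](\zeta)$.

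The heart of the argument is the pointwise-to-averaged estimate
\begin{equation*}
\int_{\mathbb{B}_n}|u\,h(\varphi)-v\,h(\psi)|^p\,dv_\alpha \lesssim \int_{\mathbb{B}_n}|h|^p\bigl[M_s(\omega_p)+M_s(\sigma_{\beta,p})\bigr]\,dv_\alpha,
\end{equation*}
valid for every $h\in A_\alpha^p(\mathbb{B}_n)$. To establish it I would fix a small $r\in(0,s)$ and split $\mathbb{B}_n$ according to whether $\rho(z)<r$ or $\rho(z)\geq r$. On $\{\rho<r\}$, the decomposition $u\,h(\varphi)-v\,h(\psi) = u[h(\varphi)-h(\psi)] + (u-v)h(\psi)$ reduces matters to two pieces: Lemma~\ref{lemma2.1} applied with $w=\psi(z)\in E(\varphi(z),r)$ gives the first piece a factor $\rho(z)^p$ and an $E(\varphi(z),s)$-average of $|h|^p$, which after push-forward through $\varphi$ and Fubini (using $\zeta\in E(w,s)\iff w\in E(\zeta,s)$ together with \eqref{equa2.3}) becomes $\int|h|^p M_s(\omega_{\varphi,u}^p)\,dv_\alpha$; for the second piece, the trivial bound $1\leq(1-r)^{-\beta}(1-\rho)^\beta$ on $\{\rho<r\}$ inserts the weight that makes $|u-v|^p\,dv_\alpha$ push forward to $\sigma_{\psi,\beta}^p$, and \eqref{equa2.5} together with \eqref{equa2.7} delivers the averaged form. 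On $\{\rho\geq r\}$ I would bound directly by $|u|^p|h(\varphi)|^p+|v|^p|h(\psi)|^p$ and use $\rho^p\geq r^p$ to absorb $|u|^p$ and $|v|^p$ into $|u\rho|^p$ and $|v\rho|^p$, recovering the $\omega_p$ contribution via \eqref{equa2.5}.

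Applying this estimate with $h = R_m f$ for $\|f\|_p = 1$ and splitting the right-hand side along $\{|\zeta|\leq\delta\}\cup\{|\zeta|>\delta\}$, the outer part is bounded by $\sup_{|\zeta|>\delta}[M_s(\omega_p)+M_s(\sigma_{\beta,p})](\zeta)\cdot\|R_m f\|_p^p\leq 2^p\sup(\cdots)$. For the inner part, the boundedness hypothesis on $C_{u,\varphi}-C_{v,\psi}$ forces $\omega_p$ and $\sigma_{\beta,p}$ to be Carleson, so $M_s(\omega_p+\sigma_{\beta,p})\in L^\infty(\mathbb{B}_n)$; it then suffices to show $\sup_{\|f\|_p\leq 1}\int_{|\zeta|\leq\delta}|R_m f|^p\,dv_\alpha\to 0$ as $m\to\infty$, which is another normal-family argument: \eqref{equa2.5} makes the unit ball of $A_\alpha^p(\mathbb{B}_n)$ uniformly bounded and hence equicontinuous on the compact set $\{|\zeta|\leq\delta\}$, while $|\zeta-t_m\zeta|\leq\delta(1-t_m)\to 0$ uniformly. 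Taking $\limsup_{m\to\infty}$ and then $\delta\to 1^-$ completes the argument. The main obstacle is the bookkeeping in the averaged estimate: the $\rho$-case split, the pull-backs through $\varphi$ and $\psi$, the insertion of $(1-\rho)^\beta$, and the Fubini swap must line up so that $\omega_p$ and $\sigma_{\beta,p}$ appear with exactly the weights $|\rho u|^p$, $|\rho v|^p$, and $|u-v|^p(1-\rho)^\beta$. Note that the upper bound alone needs only $\beta>0$, since $(1-\rho)^\beta$ is inserted merely as a benign factor on $\{\rho<r\}$; the stronger restriction $\beta>n+1+\alpha$ of Theorem~\ref{theorem1.1} should enter only via the matching lower bound, not via Theorem~\ref{theorem3.1}.
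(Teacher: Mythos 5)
Your proposal is correct and follows essentially the same route as the paper: the compact approximation via $T_m$ and $R_m$, the split of $\mathbb{B}_n$ along $\rho<r$ versus $\rho\geq r$, Lemma~\ref{lemma2.1} plus Fubini for the $u[h(\varphi)-h(\psi)]$ piece, insertion of $(1-\rho)^{\beta}$ on $\{\rho<r\}$ for the $(u-v)h(\psi)$ piece, and the final $B_{\delta}$ splitting using the Carleson condition forced by boundedness. Your closing observations are also accurate (the paper likewise uses only $(1-\rho)^{\beta}\geq(1-r)^{\beta}$, so no lower restriction on $\beta$ enters the upper bound), and your normal-family justification that $\sup_{\|f\|_p\leq 1}\int_{B_\delta}|R_mf|^p\,dv_\alpha\to 0$ is in fact spelled out more carefully than in the paper.
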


\begin{proof}
	Fix $0<r<s<1$. Let $G_r=\{z\in\mathbb{B}_n:\rho(z)<r\}$. Assume $C_{u,\varphi}-C_{v,\psi}$ is bounded on $A_{\alpha}^p(\mathbb{B}_n)$. Since $T_m$ is compact on $A_{\alpha}^p(\mathbb{B}_n)$, so is $(C_{u,\varphi}-C_{v,\psi})T_m$. Hence,
	\begin{equation*}
		\begin{split}
			\|C_{u,\varphi}-C_{v,\psi}\|_e^p&\leq \overline{\lim_{m\to\infty}}\|(C_{u,\varphi}-C_{v,\psi})\circ R_m\|^p\\
			&=\overline{\lim_{m\to\infty}}\sup_{\|f\|_{p}\leq 1}\|(C_{u,\varphi}-C_{v,\psi})\circ R_mf\|_p^p\\
			&\lesssim \overline{\lim_{m\to\infty}}\sup_{\|f\|_p\leq 1}\left[I_{1,m}(f)+I_{2,m}(f)+I_{3,m}(f)\right],
		\end{split}
	\end{equation*}
	where
	\begin{equation*}
		I_{1,m}(f)=\int_{\mathbb{B}_n\backslash G_r}|u(z)R_mf(\varphi(z))-v(z)R_mf(\psi(z))|^pdv_{\alpha}(z),
	\end{equation*}
	\begin{equation*}
		I_{2,m}(f)=\int_{G_r}|u(z)-v(z)|^p\left(|R_mf(\varphi(z))|^p+|R_mf(\psi(z))|^p\right)dv_{\alpha}(z)
	\end{equation*}
	and 
	\begin{equation*}
		I_{3,m}(f)=\int_{G_r}\left(|u(z)|^p+|v(z)|^p\right)\left|R_mf(\varphi(z))-R_mf(\psi(z))\right|^pdv_{\alpha}(z).
	\end{equation*}
	Now we estimate each integral separately. For the integral $I_{1,m}(f)$, note that $\rho(z)\geq r$ for $z\in\mathbb{B}_n\backslash G_r$. By \eqref{equa2.7}, we obtain
	\begin{equation}\label{equa3.1}
		\begin{split}
			I_{1,m}(f)&\lesssim \int_{\mathbb{B}_n\backslash G_r}\rho(z)^p\left|u(z)R_mf(\varphi(z))-v(z)R_mf(\psi(z))\right|^pdv_{\alpha}(z)\\
			&\lesssim \int_{\mathbb{B}_n}\left[|u\rho|^p|(R_mf)\circ\varphi|^p+|v\rho|^p|(R_mf)\circ\psi|^p\right]dv_{\alpha}\\
			&=\int_{\mathbb{B}_n}|R_mf|^pd\omega_p\\
			&\lesssim \int_{\mathbb{B}_m}|R_mf|^pM_s(\omega_p)dv_{\alpha}.
		\end{split}
	\end{equation}
	For the integral $I_{2,m}(f)$, note that $1-\rho(z)\geq 1-r$ for $z\in G_r$. By \eqref{equa2.7}, we obtain
	\begin{equation}\label{equa3.2}
		\begin{split}
			I_{2,m}(f)&\lesssim \int_{G_r}(1-\rho(z))^{\beta}|u(z)-v(z)|^p\left(|R_mf(\varphi(z))|^p+|R_mf(\psi(z))|^p\right)dv_{\alpha}(z)\\
			&\lesssim \int_{\mathbb{B}_n}|R_mf(z)|^pd\sigma_{\beta,p}(z)\\
			&\lesssim \int_{\mathbb{B}_n}|R_mf(z)|^pM_s(\sigma_{\beta,p})(z)dv_{\alpha}(z).
		\end{split}
	\end{equation}
	It remains to estimate the integral $I_{3,m}(f)$. Since each $R_m$ is bounded, for $z\in G_r$, by Lemma \ref{lemma2.1}, we have
	\begin{equation*}
		\begin{split}
			&\left|R_mf(\varphi(z))-R_mf(\psi(z))\right|^p\lesssim \frac{\rho(z)^p}{(1-|\varphi(z)|^2)^{n+1+\alpha}}\int_{E(\varphi(z),s)}|R_mf(\zeta)|^pdv_{\alpha}(\zeta).
		\end{split}
	\end{equation*}
	Then it follows from \eqref{equa2.3} that
	\begin{equation*}
		\begin{split}
			&\int_{G_r}|u(z)|^p|R_mf(\varphi(z))-R_mf(\psi(z))|^pdv_{\alpha}(z)\\
			&\quad \lesssim \int_{G_r}\frac{|u(z)|^p\rho(z)^p}{(1-|\varphi(z)|^2)^{n+1+\alpha}}\left(\int_{E(\varphi(z),s)}|R_mf(\zeta)|^pdv_{\alpha}(\zeta)\right)dv_{\alpha}(z)\\
			&\quad \lesssim \int_{\mathbb{B}_n}|R_mf(\zeta)|^p\frac{\int_{\varphi^{-1}(E(\zeta,s))}|u(z)|^p\rho(z)^pdv_{\alpha}(z)}{(1-|\zeta|^2)^{n+1+\alpha}}dv_{\alpha}(\zeta)\\
			&\quad =\int_{\mathbb{B}_n}|R_mf(\zeta)|^pM_s(\omega_{\varphi,u}^p)(\zeta)dv_{\alpha}(\zeta).
		\end{split}
	\end{equation*}
	Similarly,
	\begin{equation*}
		\begin{split}
			&\int_{G_r}|v(z)|^p|R_mf(\varphi(z))-R_mf(\psi(z))|^pdv_{\alpha}(z)\lesssim \int_{\mathbb{B}_n}|R_mf(\zeta)|^pM_s(\omega_{\psi,v}^p)(\zeta)dv_{\alpha}(\zeta).
		\end{split}
	\end{equation*}
	Therefore,
	\begin{equation}\label{equa3.3}
		I_{3,m}(f)\lesssim \int_{\mathbb{B}_n}|R_mf(z)|^pM_s(\omega_p)(z)dv_{\alpha}(z).
	\end{equation}
	
	Combining \eqref{equa3.1}, \eqref{equa3.2} and \eqref{equa3.3}, we obtain
	\begin{equation*}
		\begin{split}
			&\|C_{u,\varphi}-C_{v,\psi}\|_e^p\lesssim \overline{\lim_{m\to\infty}}\sup_{\|f\|_p\leq 1}\int_{\mathbb{B}_n}|R_mf(z)|^p\left[M_s(\omega_p)(z)+M_s(\sigma_{\beta,p})(z)\right]dv_{\alpha}(z).
		\end{split}
	\end{equation*}
	By \cite[Theorem 4.5]{TcYzZz} or \cite[Theorem 1.1]{CbCkKhPi}, the boundedness of $C_{u,\varphi}-C_{v,\psi}$ implies that
	\[\sup_{z\in\mathbb{B}_n}M_s(\omega_p)(z)+M_s(\sigma_{\beta,p})(z)<\infty.\]
	Hence,
	\begin{equation*}
		\begin{split}
			&\overline{\lim_{m\to\infty}}\sup_{\|f\|_p\leq 1}\int_{B_{\delta}}|R_mf(z)|^p\left[M_s(\omega_p)(z)+M_s(\sigma_{\beta,p})(z)\right]dv_{\alpha}(z)\\
			&\quad \lesssim \overline{\lim_{m\to\infty}}\sup_{\|f\|_p\leq 1}\int_{B_{\delta}}|R_mf(z)|^pdv_{\alpha}(z)=0,
		\end{split}
	\end{equation*}
	since $R_mf$ converges to 0 uniformly on compact subsets of $\mathbb{B}_n$ as $m\to\infty$, where $0<\delta<1$ and $B_{\delta}=\left\{z:|z|\leq \delta\right\}$. Consequently, by the boundedness of $R_m$ ($\|R_m\|\leq 2$), we obtain
	\begin{equation*}
		\begin{split}
			\|C_{u,\varphi}-C_{v,\psi}\|_e^p&\lesssim \overline{\lim_{m\to\infty}}\sup_{\|f\|_p\leq 1}\int_{\mathbb{B}_n\backslash B_{\delta}}|R_mf(z)|^p\left[M_s(\omega_p)(z)+M_s(\sigma_{\beta,p})(z)\right]dv_{\alpha}(z)\\
			&\leq 2\sup_{|z|>\delta}M_s(\omega_p)(z)+M_s(\sigma_{\beta,p})(z).
		\end{split}
	\end{equation*}
	Letting $\delta\to 1^-$ yields
	\[\|C_{u,\varphi}-C_{v,\psi}\|_e^p\lesssim \lim_{\delta\to 1^-}\sup_{|z|>\delta}M_s(\omega_p)(z)+M_s(\sigma_{\beta,p})(z).\]
	The proof is complete.
\end{proof}

\subsection{Lower Bounds}

We proceed to provide the lower bound estimate for the essential norm of the operator $C_{u,\varphi}-C_{v,\psi}$.

\begin{theorem}\label{theorem3.2}
	Let $1<p<\infty$ and $\alpha>-1$. Suppose $u,v\in H(\mathbb{B}_n)$, $\varphi,\psi\in S(\mathbb{B}_n)$ and $C_{u,\varphi}-C_{v,\psi}$ is bounded on $A_{\alpha}^p(\mathbb{B}_n)$. Then 
	\begin{equation*}
		\|C_{u,\varphi}-C_{v,\psi}\|_e^p\gtrsim \overline{\lim_{|a|\to 1^-}}M_s(\omega_p)(a)+M_s(\sigma_{\beta,p})(a)
	\end{equation*}
	for some (or any) $s\in (0,1)$.
\end{theorem}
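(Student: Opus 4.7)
The plan is to apply Lemma \ref{lemma2.3} to two families of unit test functions whose $A_\alpha^p$-norms are bounded and which converge weakly to $0$, producing separate lower bounds that match $M_s(\sigma_{\beta,p})$ and $M_s(\omega_p)$; summing the two bounds (at the cost of an absolute constant absorbed into $\gtrsim$) gives the theorem. First I would choose $\{a_m\}\subset\mathbb{B}_n$ with $|a_m|\to 1^-$, passing to subsequences so that each of $M_s(\omega_p)(a_m)$ and $M_s(\sigma_{\beta,p})(a_m)$ tends to its own $\limsup$. Both families will be built from the kernel test functions $f_{a,N,p}$ of Section 2.2, which satisfy $\|f_{a_m,N,p}\|_p\simeq 1$ and converge to $0$ uniformly on compact subsets of $\mathbb{B}_n$, hence weakly to $0$ in $A_\alpha^p$ since $1<p<\infty$.

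For the $\sigma_{\beta,p}$ lower bound, I would take $f_m = f_{a_m,N,p}$ directly. Writing
\[
u f_m(\varphi)-v f_m(\psi) = (u-v) f_m(\varphi)+v\bigl[f_m(\varphi)-f_m(\psi)\bigr]
\]
and restricting the $L^p$-integral to $\varphi^{-1}(E(a_m,s))$, the two-sided bound $|f_m(w)|\simeq (1-|a_m|^2)^{-(n+1+\alpha)/p}$ on $E(a_m,s)$ gives the leading term, while Lemma \ref{lemma2.1} estimates the error as $|f_m(\varphi)-f_m(\psi)|\lesssim \rho\,|f_m(\varphi)|$ whenever $\rho$ is small. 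The weight $(1-\rho)^\beta$ in the definition of $\sigma_{\beta,p}$ suppresses the regime where $\rho$ stays bounded away from $0$, so a triangle-inequality argument absorbs the error and yields $\|(C_{u,\varphi}-C_{v,\psi})f_m\|_p^p\gtrsim M_s(\sigma_{\varphi,\beta}^p)(a_m)$. Interchanging the roles of $\varphi$ and $\psi$ produces the symmetric piece $M_s(\sigma_{\psi,\beta}^p)(a_m)$.

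For the $\omega_p$ lower bound, I would use Lemma \ref{lemma2.2} to engineer a test function that detects $\rho$. A function of the form
\[
g_m(z) = \ell_{a_m}(z)\cdot \frac{(1-|a_m|^2)^{N+1-(n+1+\alpha)/p}}{(1-\langle z, a_m\rangle)^{N+1}},
\]
where $\ell_{a_m}(z)$ is a normalized linear combination of $\langle z-a_m, a_m\rangle$ and $(1-|a_m|^2)^{1/2}\langle z-a_m, a_m^{(j)}\rangle$, has $A_\alpha^p$-norm $\simeq 1$, is weakly null, and vanishes at $a_m$ to first order. By Lemma \ref{lemma2.2} the coefficients of $\ell_{a_m}$ may be chosen so that $|g_m(\varphi(z))-g_m(\psi(z))|\gtrsim \rho(z)\cdot(1-|a_m|^2)^{-(n+1+\alpha)/p}$ for $z\in\varphi^{-1}(E(a_m,s))$. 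Using the split
\[
u\, g_m(\varphi)-v\, g_m(\psi) = u\bigl[g_m(\varphi)-g_m(\psi)\bigr]+(u-v)\, g_m(\psi),
\]
and the fact that $g_m(\psi)$ is itself $O(\rho\cdot(1-|a_m|^2)^{-(n+1+\alpha)/p})$ on this set (because $g_m(a_m)=0$), the Carleson-measure consequence of the boundedness hypothesis absorbs the error term and yields $\|(C_{u,\varphi}-C_{v,\psi})g_m\|_p^p\gtrsim M_s(\omega_{\varphi,u}^p)(a_m)$, with the analog for $\omega_{\psi,v}^p$ obtained by symmetry.

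The main obstacle lies in the $\omega_p$ step: the coefficients in $\ell_{a_m}$ are selected according to the dominant direction of $\varphi(z)-\psi(z)$ in the decomposition of Lemma \ref{lemma2.2}, and this direction varies with $z$. To convert the pointwise lower bounds into a single global estimate on $\omega_p$, one must partition $\varphi^{-1}(E(a_m,s))$ into finitely many sets on each of which a fixed linear combination dominates, apply the test-function argument on each piece, and then sum the contributions. Simultaneously one must verify that the error term $(u-v)g_m(\psi)$, though of the right order in $\rho$, is genuinely absorbable: this is where the Carleson bound on $\sigma_{\beta,p}$ inherent in the boundedness hypothesis plays a crucial role.
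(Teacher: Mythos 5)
Your overall skeleton (Lemma \ref{lemma2.3} applied to the kernel test functions of Section 2.2, plus Lemma \ref{lemma2.2} to reassemble $\rho$ from finitely many directional pieces) matches the paper, but the central mechanism you propose does not work, and it is precisely the point where the paper's proof is clever. The paper never absorbs an error term: it tests against the \emph{two} functions $f_{a,N,p}$ and $f_{a,N+1,p}$ with consecutive exponents, multiplies the integrand of the first estimate by $|t|^p$ where $t=\frac{1-\langle \varphi(z),a\rangle}{1-\langle \psi(z),a\rangle}$ (bounded on $\varphi^{-1}(E(a,s))$), and adds. The triangle inequality then gives $|u-vt^{N}||t|+|u-vt^{N+1}|\geq |ut-vt^{N+1}|+|u-vt^{N+1}|\geq |u|\,|t-1|$, so the $v$-terms cancel \emph{identically} and one is left with $|u(z)|^p\bigl|\tfrac{\langle \psi(z)-\varphi(z),a\rangle}{1-\langle\psi(z),a\rangle}\bigr|^p$ with nothing left over. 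The directional terms are then obtained by repeating this at the shifted centers $b^j=a+s\sqrt{1-|a|^2}\,a^j$ and subtracting, so no partition of $\varphi^{-1}(E(a,s))$ by dominant direction is needed; and the $\sigma$-bound follows from the same two estimates via $|u-v||t|^N\leq |u-vt^N|+|u||t^N-1|\lesssim |u-vt^N|+|u||t-1|$, the last term being controlled by the already-established combination.

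The concrete gap in your version is the error term in the $\omega_p$ step. You claim $g_m(\psi)=O\bigl(\rho\cdot(1-|a_m|^2)^{-(n+1+\alpha)/p}\bigr)$ on $\varphi^{-1}(E(a_m,s))$ ``because $g_m(a_m)=0$.'' Vanishing at $a_m$ only gives $|g_m(\psi(z))|\lesssim d(\psi(z),a_m)\,(1-|a_m|^2)^{-(n+1+\alpha)/p}$, and $d(\psi(z),a_m)$ is comparable to $\rho(z)+s$, not to $\rho(z)$; indeed $\psi(z)$ can sit at pseudo-hyperbolic distance close to $1$ from $a_m$ while still satisfying $|1-\langle\psi(z),a_m\rangle|\simeq 1-|a_m|^2$, in which case $|g_m(\psi(z))|\simeq (1-|a_m|^2)^{-(n+1+\alpha)/p}$. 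The error term is then the \emph{unweighted} average $\frac{1}{(1-|a_m|^2)^{n+1+\alpha}}\int_{\varphi^{-1}(E(a_m,s))}|u-v|^p\,dv_\alpha$, which neither the boundedness hypothesis (it only controls $|u-v|^p(1-\rho)^\beta$) nor your $\sigma_{\beta,p}$ estimate can absorb — exactly on the set where $\rho$ is close to $1$ the weight $(1-\rho)^\beta$ collapses while $g_m(\psi)$ does not. A similar, though more repairable, issue occurs in your $\sigma$ step: the error $v[f_m(\varphi)-f_m(\psi)]$ produces the mixed pull-back $\bigl(|v\rho|^p dv_\alpha\bigr)\circ\varphi^{-1}$, which is not one of $\omega^p_{\varphi,u}$ or $\omega^p_{\psi,v}$ and is not directly dominated by $\|C_{u,\varphi}-C_{v,\psi}\|_e^p$; controlling it requires first proving the $\omega_p$ bound and then a small-parameter absorption, so your ordering of the two steps is also backwards. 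As written, the proposal does not yield the theorem.
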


\begin{proof}
	For any $a\in\mathbb{B}_n$ and $N>\frac{n+1+\alpha}{p}$, recall the functions
	\[f_{a, N, p}(z)=\frac{(1-|a|^2)^{N-\frac{n+1+\alpha}{p}}}{(1-\langle z,a\rangle)^N},\quad z\in\mathbb{B}_n.\]
	It is easy to see that $f_{a,N,p}$ converges weakly to 0 in $A_{\alpha}^p(\mathbb{B}_n)$ as $|a|\to 1^-$. So by Lemma \ref{lemma2.3}, we get 
	\begin{equation*}
		\begin{split}
			&\|C_{u,\varphi}-C_{v,\psi}\|_e\\
			&\quad \gtrsim \overline{\lim_{|a|\to 1^-}}\|(C_{u,\varphi}-C_{v,\psi})f_{a,N,p}\|_p+\overline{\lim_{|a|\to 1^-}}\|(C_{u,\varphi}-C_{v,\psi})f_{a,N+1,p}\|_p.
		\end{split}
	\end{equation*}
	Fix $s\in (\frac{1}{4},\frac{1}{2})$. By \eqref{equa2.3}, we have
	\begin{equation}\label{equa3.4}
		\begin{split}
			&\|(C_{u,\varphi}-C_{v,\psi})f_{a,N,p}\|_p^p \gtrsim\frac{\int_{\varphi^{-1}(E(a,s))}\left|u(z)-v(z)\left(\frac{1-\langle \varphi(z),a\rangle}{1-\langle \psi(z),a\rangle}\right)^N\right|^pdv_{\alpha}(z)}{(1-|a|^2)^{n+1+\alpha}}.
		\end{split}
	\end{equation}
	Similarly,
	\begin{equation}\label{equa3.5}
		\begin{split}
			&\|(C_{u,\varphi}-C_{v,\psi})f_{a,N+1,p}\|_p^p \gtrsim\frac{\int_{\varphi^{-1}(E(a,s))}\left|u(z)-v(z)\left(\frac{1-\langle \varphi(z),a\rangle}{1-\langle \psi(z),a\rangle}\right)^{N+1}\right|^pdv_{\alpha}(z)}{(1-|a|^2)^{n+1+\alpha}}.
		\end{split}
	\end{equation}
	Note that $\left|\frac{1-\langle \varphi(z),a\rangle}{1-\langle \psi(z),a\rangle}\right|\lesssim 1$ when $\varphi(z)\in E(a,s)$ by \eqref{equa2.3}. Multiplying the integrand in \eqref{equa3.4} by $\left|\frac{1-\langle \varphi(z),a\rangle}{1-\langle \psi(z),a\rangle}\right|^p$, and adding it to \eqref{equa3.5}, by the triangle inequality, we obtain
	\begin{equation}\label{equa3.6}
		\begin{split}
			&\|(C_{u,\varphi}-C_{v,\psi})f_{a,N,p}\|_p^p+\|(C_{u,\varphi}-C_{v,\psi})f_{a,N+1,p}\|_{p}^p\\
			&\quad \gtrsim \int_{\varphi^{-1}(E(a,s))}|u(z)|^p\left|\frac{\langle \psi(z)-\varphi(z),a\rangle}{1-\langle \psi(z),a\rangle}\right|^p\frac{1}{(1-|a|^2)^{n+1+\alpha}}dv_{\alpha}(z).
		\end{split}
	\end{equation}
	By \eqref{equa2.4}, $|1-\langle \psi(z),a\rangle|\simeq |1-\langle \psi(z),\varphi(z)\rangle|$ when $\varphi(z)\in E(a,s)$. So we get
	\begin{equation}\label{equa3.7}
		\|C_{u,\varphi}-C_{v,\psi}\|_e^p\gtrsim \overline{\lim_{|a|\to 1^{-}}}\frac{\int_{\varphi^{-1}(E(a,s))}|u(z)|^p\left|\frac{\langle \psi(z)-\varphi(z),a\rangle}{1-\langle \psi(z),\varphi(z)\rangle}\right|^pdv_{\alpha}(z)}{(1-|a|^2)^{n+1+\alpha}}.
	\end{equation}
	
	Let 
	\[b^j=a+s\sqrt{1-|a|^2}a^j,\quad j=2,\cdots,n.\]
	Then $d(a,b^j)<s$ and $|b^j|\to 1^-$ whenever $|a|\to 1^-$. Moreover, $E(a,s)\subset E(b^j,2s)$. Applying a similar argument as above and by \eqref{equa2.3}, we obtain
	\begin{equation}\label{equa3.8}
		\begin{split}
			\|C_{u,\varphi}-C_{v,\psi}\|_e^p
			&\gtrsim \overline{\lim_{|a|\to 1^-}}\|(C_{u,\varphi}-C_{v,\psi})f_{b^j,N,p}\|_p^p+\overline{\lim_{|a|\to 1^-}}\|(C_{u,\varphi}-C_{v,\psi})f_{b^j,N+1,p}\|_p^p\\
			&\gtrsim \overline{\lim_{|a|\to 1^-}}\frac{\int_{\varphi^{-1}(E(b^j,2s))}|u(z)|^p\left|\frac{\langle \psi(z)-\varphi(z),b^j\rangle}{1-\langle \psi(z),\varphi(z)\rangle}\right|^pdv_{\alpha}(z)}{(1-|b^j|^2)^{n+1+\alpha}}\\
			&\gtrsim \overline{\lim_{|a|\to 1^-}}\frac{\int_{\varphi^{-1}(E(a,s))}|u(z)|^p\left|\frac{\langle \psi(z)-\varphi(z),b^j\rangle}{1-\langle \psi(z),\varphi(z)\rangle}\right|^pdv_{\alpha}(z)}{(1-|a|^2)^{n+1+\alpha}}.
		\end{split}
	\end{equation}
	Combining \eqref{equa3.7} and \eqref{equa3.8}, we get
	\begin{equation}\label{equa3.9}
		\begin{split}
			&\|C_{u,\varphi}-C_{v,\psi}\|_e^p
			\gtrsim  \overline{\lim_{|a|\to 1^-}}\frac{\int_{\varphi^{-1}(E(a,s))}|u(z)|^p\left|\frac{\langle \psi(z)-\varphi(z),\sqrt{1-|a|^2}a^j\rangle}{1-\langle \psi(z),\varphi(z)\rangle}\right|^pdv_{\alpha}(z)}{(1-|a|^2)^{n+1+\alpha}}.
		\end{split}
	\end{equation}
	for $j=2,\cdots,n$. Then combining \eqref{equa3.7} and \eqref{equa3.9}, and by Lemma \ref{lemma2.2}, we obtain
	\begin{equation}\label{equa3.10}
		\begin{split}
			\|C_{u,\varphi}-C_{v,\psi}\|_e^p&\gtrsim \overline{\lim_{|a|\to 1^-}}\frac{\int_{\varphi^{-1}(E(a,s))}|u(z)|^p\rho(z)^pdv_{\alpha}(z)}{(1-|a|^2)^{n+1+\alpha}}\\
			&=\overline{\lim_{|a|\to 1^-}}M_s(\omega_{\varphi,u}^p)(a).
		\end{split}
	\end{equation}
	Similarly, 
	\begin{equation}\label{equa3.11}
		\|C_{u,\varphi}-C_{v,\psi}\|_e^p\gtrsim\overline{\lim_{|a|\to 1^-}}M_s(\omega_{\psi,v}^p)(a).
	\end{equation}
	
	On the other hand, when $\varphi(z)\in E(a,s)$, by \eqref{equa2.1}, \eqref{equa2.3} and \eqref{equa2.4}, we have
	\begin{equation*}
		\left|\frac{1-\langle \varphi(z),a\rangle}{1-\langle \psi(z),a\rangle}\right|\gtrsim \frac{(1-|\varphi(z)|^2)(1-|\psi(z)|^2)}{|1-\langle \varphi(z),\psi(z)\rangle|^2}=1-\rho(z)^2
	\end{equation*}
	and
	\begin{equation*}
		\begin{split}
			&|u(z)-v(z)|\left|\frac{1-\langle \varphi(z),a\rangle}{1-\langle \psi(z),a\rangle}\right|^N\\
			&\quad \leq \left|u(z)-v(z)\left(\frac{1-\langle \varphi(z),a\rangle}{1-\langle \psi(z),a\rangle}\right)^N\right|+|u(z)|\left|1-\frac{1-\langle \varphi(z),a\rangle}{1-\langle \psi(z),a\rangle}\right|\\
			&\quad \lesssim\left|u(z)-v(z)\left(\frac{1-\langle \varphi(z),a\rangle}{1-\langle \psi(z),a\rangle}\right)^N\right|+|u(z)|\left|\frac{\langle \varphi(z)-\psi(z),a\rangle}{1-\langle\psi(z),a\rangle }\right|.
		\end{split}
	\end{equation*}
	So we combine \eqref{equa3.4} and \eqref{equa3.6} to obtain
	\begin{equation}\label{equa3.12}
		\begin{split}
			\|C_{u,\varphi}-C_{v,\psi}\|_e^p&\gtrsim\overline{\lim_{|a|\to 1^-}}\frac{\int_{\varphi^{-1}(E(a,s))}|u(z)-v(z)|^p(1-\rho(z))^{Np}dv_{\alpha}(z)}{(1-|a|^2)^{n+1+\alpha}}\\
			&\geq \overline{\lim_{|a|\to 1^-}}M_s(\sigma_{\varphi,\beta}^p)(a),
		\end{split}
	\end{equation}
	where $\beta\geq Np>n+1+\alpha$. Similarly,
	\begin{equation}\label{equa3.13}
		\|C_{u,\varphi}-C_{v,\psi}\|_e^p\gtrsim\overline{\lim_{|a|\to 1^-}}M_s(\sigma_{\psi,\beta}^p)(a).
	\end{equation}
	Finally, it follows from \eqref{equa3.10}-\eqref{equa3.13} that
	\begin{equation*}
		\|C_{u,\varphi}-C_{v,\psi}\|_e^p\gtrsim \overline{\lim_{|a|\to 1^-}}M_s(\omega_p)(a)+M_s(\sigma_{\beta,p})(a).
	\end{equation*}
	The proof is complete.
\end{proof}

\section{Proof of Theorem 1.2}

In this section, we aim to prove Theorem \ref{theorem1.2}, which characterizes the Schatten class difference of two weighted composition operators.

\begin{proof}[{\bf Proof of Theorem 1.2}]
	{\it Sufficiency:} Fix $s\in (0,1)$. Assume $M_s(\omega_2)+M_s(\sigma_{\beta,2})\in L^{\frac{p}{2}}(\mathbb{B}_n,d\lambda)$. For any $f\in A_{\alpha}^2(\mathbb{B}_n)$, we conclude from the proof of Theorem \ref{theorem3.1} that
	\begin{equation}\label{equa4.1}
		\begin{split}
			&\langle (C_{u,\varphi}-C_{v,\psi})^*(C_{u,\varphi}-C_{v,\psi})f,f\rangle=\|(C_{u,\varphi}-C_{v,\psi})f\|_{2}^2\\
			&\quad \lesssim \int_{\mathbb{B}_n}|f(z)|^2\left[M_s(\omega_2)(z)+M_s(\sigma_{\beta,2})(z)\right]dv_{\alpha}(z).
		\end{split}
	\end{equation}
	Let $d\mu=\left[M_s(\omega_2)+M_s(\sigma_{\beta,2})\right]dv_{\alpha}$. By \eqref{equa4.1}, we know that $$(C_{u,\varphi}-C_{v,\psi})^*(C_{u,\varphi}-C_{v,\psi})\lesssim T_{\mu}.$$ According to Lemma \ref{lemma2.6}, the assumption implies that $T_{\mu}\in S_{\frac{p}{2}}(A_{\alpha}^2)$. It follows that $(C_{u,\varphi}-C_{v,\psi})^*(C_{u,\varphi}-C_{v,\psi})\in S_{\frac{p}{2}}(A_{\alpha}^2)$, and then $C_{u,\varphi}-C_{v,\psi}\in S_p(A_{\alpha}^2)$.
	
	{\it Necessity:} Now we assume $C_{u,\varphi}-C_{v,\psi}\in S_p(A_{\alpha}^2)$ and $p\geq 2$. Then $(C_{u,\varphi}-C_{v,\psi})^*(C_{u,\varphi}-C_{v,\psi})\in S_{\frac{p}{2}}(A_{\alpha}^2)$. It follows from Lemma \ref{lemma2.5} that the functions
	\[z\mapsto \|(C_{u,\varphi}-C_{v,\psi})k_z\|_2^2\] 
	and
	\[z\mapsto \|(C_{u,\varphi}-C_{v,\psi})k_z^{[R]}\|_2^2\]
	belong to $L^{\frac{p}{2}}(\mathbb{B}_n,d\lambda)$.
	
	For any fixed $s\in (\frac{1}{4},\frac{1}{2})$, by \eqref{equa2.3}, we have
	\begin{equation}\label{equa4.2}
		\begin{split}
			&\|(C_{u,\varphi}-C_{v,\psi})k_z\|_2^2\\
			& \quad\geq \int_{\varphi^{-1}(E(z,s))}\left|\frac{u(\zeta)(1-|z|^2)^{\frac{n+1+\alpha}{2}}}{(1-\langle \varphi(\zeta),z\rangle)^{n+1+\alpha}}-\frac{v(\zeta)(1-|z|^2)^{\frac{n+1+\alpha}{2}}}{(1-\langle \psi(\zeta),z\rangle)^{n+1+\alpha}}\right|^2dv_{\alpha}(\zeta)\\
			& \quad\gtrsim\frac{\int_{\varphi^{-1}(E(z,s))}\left|u(\zeta)-v(\zeta)\frac{(1-\langle\varphi(\zeta),z\rangle)^{n+1+\alpha}}{(1-\langle\psi(\zeta),z\rangle)^{n+1+\alpha}}\right|dv_{\alpha}(\zeta)}{(1-|z|^2)^{n+1+\alpha}}
		\end{split}
	\end{equation}
	and
	\begin{equation}\label{equa4.3}
		\begin{split}
			&\quad\|(C_{u,\varphi}-C_{v,\psi})k_z^{[R]}\|_2^2\\
			&\quad\quad\gtrsim \frac{\int_{\varphi^{-1}(E(z,s))}\left|u(\zeta)\langle \varphi(\zeta),z\rangle-v(\zeta)\langle\psi(\zeta),z\rangle\frac{(1-\langle\varphi(\zeta),z\rangle)^{n+2+\alpha}}{(1-\langle\psi(\zeta),z\rangle)^{n+2+\alpha}}\right|^2dv_{\alpha}(\zeta)}{(1-|z|^2)^{n+1+\alpha}}.
		\end{split}
	\end{equation}
	Note that $\left|\frac{1-\langle\varphi(\zeta),z\rangle}{1-\langle \psi(\zeta),z\rangle}\right|\lesssim 1$ when $\varphi(\zeta)\in E(z,s)$ by \eqref{equa2.3}. Multiplying the integrand in \eqref{equa4.2} by $|\langle \psi(\zeta),z\rangle|^2\left|\frac{1-\langle\varphi(\zeta),z\rangle}{1-\langle \psi(\zeta),z\rangle}\right|^2$, and adding it to \eqref{equa4.3}, by the triangle inequality and \eqref{equa2.4}, we obtain
	\begin{equation}\label{equa4.4}
		\begin{split}
			&\|(C_{u,\varphi}-C_{v,\psi})k_z\|_2^2+\|(C_{u,\varphi}-C_{v,\psi})k_z^{[R]}\|_2^2\\
			&\quad \gtrsim \frac{\int_{\varphi^{-1}(E(z,s))}|u(\zeta)|^2\left|\frac{\langle \psi(\zeta)-\varphi(\zeta),z\rangle}{1-\langle \psi(\zeta),\varphi(\zeta)\rangle}\right|^2dv_{\alpha}(\zeta)}{(1-|z|^2)^{n+1+\alpha}}.
		\end{split}
	\end{equation}
	It follows that
	\begin{equation}\label{equa4.5}
		\int_{\mathbb{B}_n}\left(\frac{\int_{\varphi^{-1}(E(z,s))}|u(\zeta)|^2\left|\frac{\langle \psi(\zeta)-\varphi(\zeta),z\rangle}{1-\langle \psi(\zeta),\varphi(\zeta)\rangle}\right|^2dv_{\alpha}(\zeta)}{(1-|z|^2)^{n+1+\alpha}}\right)^{\frac{p}{2}}d\lambda(z)<\infty.
	\end{equation}
	
	Let 
	$$\eta^j=z+s\sqrt{1-|z|^2}z^j,\quad j=2,\cdots,n.$$
	Then $d(z,\eta^j)<s$ and $E(z,s)\subset E(\eta^j,2s)$. Moreover, the map $z\mapsto \eta^j$ is a bijection on $\mathbb{B}_n$ and
	\[dv(\eta^j)\simeq dv(z).\]
	It follows from \eqref{equa2.3} that
	\begin{equation*}
		\begin{split}
			&\int_{\mathbb{B}_n}\left(\|(C_{u,\varphi}-C_{v,\psi})k_{\eta^j}\|_2^2+\|(C_{u,\varphi}-C_{v,\psi})k_{\eta^j}^{[R]}\|_2^2\right)^{\frac{p}{2}}d\lambda(z)\\
			&\quad \lesssim \int_{\mathbb{B}_n}\left(\|(C_{u,\varphi}-C_{v,\psi})k_{\eta^j}\|_2^2+\|(C_{u,\varphi}-C_{v,\psi})k_{\eta^j}^{[R]}\|_2^2\right)^{\frac{p}{2}}d\lambda(\eta^j)<\infty.
		\end{split}
	\end{equation*}
	Through a similar argument as above and by \eqref{equa2.3}, we obtain
	\begin{equation}\label{equa4.6}
		\begin{split}
			&\|(C_{u,\varphi}-C_{v,\psi})k_{\eta^j}\|_2^2+\|(C_{u,\varphi}-C_{v,\psi})k_{\eta^j}^{[R]}\|_2^2\\
			&\quad \gtrsim \frac{\int_{\varphi^{-1}(E(z,2s))}|u(\zeta)|^2\left|\frac{\langle \psi(\zeta)-\varphi(\zeta),\eta^j\rangle}{1-\langle \psi(\zeta),\varphi(\zeta)\rangle}\right|^2dv_{\alpha}(\zeta)}{(1-|\eta^j|^2)^{n+1+\alpha}}\\
			&\quad\gtrsim \frac{\int_{\varphi^{-1}(E(z,s))}|u(\zeta)|^2\left|\frac{\langle \psi(\zeta)-\varphi(\zeta),\eta^j\rangle}{1-\langle \psi(\zeta),\varphi(\zeta)\rangle}\right|^2dv_{\alpha}(\zeta)}{(1-|z|^2)^{n+1+\alpha}}.
		\end{split}
	\end{equation}
	We combine \eqref{equa4.4} and \eqref{equa4.6} to obtain
	\begin{equation*}
		\begin{split}
			&\|(C_{u,\varphi}-C_{v,\psi})k_{\eta^j}\|_2^2+\|(C_{u,\varphi}-C_{v,\psi})k_{\eta^j}^{[R]}\|_2^2\\
			&\qquad\qquad\qquad\qquad\quad+\|(C_{u,\varphi}-C_{v,\psi})k_{z}\|_2^2+\|(C_{u,\varphi}-C_{v,\psi})k_{z}^{[R]}\|_2^2\\
			&\quad \gtrsim \frac{\int_{\varphi^{-1}(E(z,s))}|u(\zeta)|^2\left|\frac{\langle \psi(\zeta)-\varphi(\zeta),\sqrt{1-|z|^2}z^j\rangle}{1-\langle \psi(\zeta),\varphi(\zeta)\rangle}\right|^2dv_{\alpha}(\zeta)}{(1-|z|^2)^{n+1+\alpha}}.
		\end{split}
	\end{equation*}
	It follows that
	\begin{equation}\label{equa4.7}
		\int_{\mathbb{B}_n}\left(\frac{\int_{\varphi^{-1}(E(z,s))}|u(\zeta)|^2\left|\frac{\langle \psi(\zeta)-\varphi(\zeta),\sqrt{1-|z|^2}z^j\rangle}{1-\langle \psi(\zeta),\varphi(\zeta)\rangle}\right|^2dv_{\alpha}(\zeta)}{(1-|z|^2)^{n+1+\alpha}}\right)^{p/2}d\lambda(z)<\infty.
	\end{equation}
	Combining \eqref{equa4.5} and \eqref{equa4.7}, and by Lemma \ref{lemma2.2}, we get
	\begin{equation*}
		\int_{|z|>t_0}\left(M_s(\omega_{\varphi,u}^2)(z)\right)^{\frac{p}{2}}d\lambda(z)<\infty.
	\end{equation*}
	Similarly,
	\[\int_{|z|>t_0}\left(M_s(\omega_{\psi,v}^2)(z)\right)^{\frac{p}{2}}d\lambda(z)<\infty.\]
	On the other hand,
	\begin{equation*}
		\begin{split}
			&\int_{|z|\leq t_0}\left[M_s(\omega_2)(z)\right]^{\frac{p}{2}}d\lambda(z)\\
			&\quad \lesssim\left(\int_{\mathbb{B}_n}\left(|u(z)|^2+|v(z)|^2\right)|\varphi(z)-\psi(z)|^2dv_{\alpha}(z)\right)^{p/2}\\
			&\quad \lesssim \left(\int_{\mathbb{B}_n}|u(z)-v(z)|^2dv_{\alpha}(z)\right)^{p/2}+\sum_{j=1}^n\left(\int_{\mathbb{B}_n}|u(z)\varphi_j(z)-v(z)\psi_j(z)|^2dv_{\alpha}(z)\right)^{p/2}\\
			&\quad <\infty.
		\end{split}
	\end{equation*}
	Therefore, we get
	\begin{equation}\label{equa4.8}
		\int_{\mathbb{B}_n}\left[M_{s}(\omega_2)(z)\right]^{\frac{p}{2}}d\lambda(z)<\infty.
	\end{equation}
	
	Moreover, when $\varphi(\zeta)\in E(z,s)$, by \eqref{equa2.3}, we have
	\begin{equation*}
		\sup_{t\in [0,1]}\left|\frac{1-(t\langle \varphi(\zeta),z\rangle+(1-t)\langle\psi(\zeta),z\rangle)}{1-\langle \psi(\zeta),z\rangle}\right|\leq 1.
	\end{equation*}
	It follows that
	\begin{equation*}
		\begin{split}
			&|u(\zeta)-v(\zeta)|(1-\rho(\zeta))^{n+1+\alpha}\\
			&\quad \lesssim|u(\zeta)-v(\zeta)|\left|\frac{1-\langle \varphi(\zeta),z\rangle}{1-\langle \psi(\zeta),z\rangle}\right|^{n+1+\alpha}\\
			&\quad \leq \left|u(\zeta)-v(\zeta)\frac{(1-\langle \varphi(\zeta),z\rangle)^{n+1+\alpha}}{(1-\langle \psi(\zeta),z\rangle)^{n+1+\alpha}}\right|+|u(\zeta)|\left|1-\frac{(1-\langle \varphi(\zeta),z\rangle)^{n+1+\alpha}}{(1-\langle \psi(\zeta),z\rangle)^{n+1+\alpha}}\right|\\
			&\quad \lesssim \left|u(\zeta)-v(\zeta)\frac{(1-\langle \varphi(\zeta),z\rangle)^{n+1+\alpha}}{(1-\langle \psi(\zeta),z\rangle)^{n+1+\alpha}}\right|+|u(\zeta)|\left|\frac{\langle \varphi(\zeta)-\psi(\zeta),z\rangle}{1-\langle \varphi(\zeta),\psi(\zeta)\rangle}\right|.
		\end{split}
	\end{equation*}
	So we combine \eqref{equa4.2} and \eqref{equa4.4} to obtain
	\begin{equation}\label{equa4.9}
		\begin{split}
			\infty&>\int_{\mathbb{B}_n}\left(\frac{\int_{\varphi^{-1}(E(z,s))}|u(\zeta)-v(\zeta)|^2(1-\rho(\zeta))^{2(n+1+\alpha)}}{(1-|z|^2)^{n+1+\alpha}}\right)^{\frac{p}{2}}d\lambda(z)\\
			&\gtrsim \int_{\mathbb{B}_n}\left[M_s(\sigma_{\varphi,\beta}^2)(z)\right]^{\frac{p}{2}}d\lambda(z),
		\end{split}
	\end{equation}
	where $\beta\geq 2(n+1+\alpha)$. Similarly,
	\begin{equation}\label{equa4.10}
		\int_{\mathbb{B}_n}\left[M_s(\sigma_{\psi,\beta}^2)(z)\right]^{\frac{p}{2}}d\lambda(z)<\infty.
	\end{equation}
	Combining \eqref{equa4.8}, \eqref{equa4.9} and \eqref{equa4.10}, the proof is complete.
\end{proof}

From the proof of Theorem \ref{theorem1.2}, we conclude the following characterization for Schatten class difference $C_{u,\varphi}-C_{v,\psi}$ via Reproducing Kernel Thesis.

\begin{corollary}\label{corollary4.1}
	Let $\alpha>-1$ and $p\geq 2$. Suppose $u,v\in H(\mathbb{B}_n)$, $\varphi,\psi\in S(\mathbb{B}_n)$ and $C_{u,\varphi}-C_{v,\psi}$ is bounded on $A_{\alpha}^2(\mathbb{B}_n)$. Then $C_{u,\varphi}-C_{v,\psi}\in S_p(A_{\alpha}^2)$ if and only if 
	\begin{equation*}
		\int_{\mathbb{B}_n}\left(\|(C_{u,\varphi}-C_{v,\psi})k_z\|_2^p+\|(C_{u,\varphi}-C_{v,\psi})k_z^{[R]}\|_2^p\right)d\lambda(z)<\infty.
	\end{equation*} 
\end{corollary}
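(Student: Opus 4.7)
The plan is to read the two directions out of the proof of \fullref{theorem1.2}, which already does all the real work; only the bookkeeping is different.

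For the necessity direction, set $T=(C_{u,\varphi}-C_{v,\psi})^*(C_{u,\varphi}-C_{v,\psi})$, which is a positive operator on $A_{\alpha}^2(\mathbb{B}_n)$. Since $C_{u,\varphi}-C_{v,\psi}\in S_p(A_{\alpha}^2)$, the operator $T$ belongs to $S_{p/2}(A_{\alpha}^2)$. A direct computation gives
\[
\widetilde{T}(z)=\langle Tk_z,k_z\rangle=\|(C_{u,\varphi}-C_{v,\psi})k_z\|_2^2,\qquad \widetilde{T}_{[R]}(z)=\|(C_{u,\varphi}-C_{v,\psi})k_z^{[R]}\|_2^2.
\]
Applying \fullref{lemma2.5} to $T$ with exponent $p/2\geq 1$ yields $\widetilde{T},\widetilde{T}_{[R]}\in L^{p/2}(\mathbb{B}_n,d\lambda)$, which is precisely the stated integral condition.

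For the sufficiency direction, assume the integral in the statement is finite. Following the calculation in the necessity portion of the proof of \fullref{theorem1.2}, I would first pass from $\|(C_{u,\varphi}-C_{v,\psi})k_z\|_2^2$ and $\|(C_{u,\varphi}-C_{v,\psi})k_z^{[R]}\|_2^2$ to the key lower bound \eqref{equa4.4}, then exploit the change of base point $z\mapsto \eta^j=z+s\sqrt{1-|z|^2}z^j$ (using that $dv(\eta^j)\simeq dv(z)$) to also pick up the directional components $\sqrt{1-|z|^2}z^j$, and finally invoke \fullref{lemma2.2} to combine these into the averaging functions $M_s(\omega_{\varphi,u}^2)$ and $M_s(\omega_{\psi,v}^2)$. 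A parallel computation using the pointwise inequality that bounds $|u-v|(1-\rho)^{n+1+\alpha}$ in terms of the integrand of \eqref{equa4.2} delivers $M_s(\sigma_{\varphi,\beta}^2),M_s(\sigma_{\psi,\beta}^2)\in L^{p/2}(\mathbb{B}_n,d\lambda)$ for any $\beta\geq 2(n+1+\alpha)$. Hence $M_s(\omega_2)+M_s(\sigma_{\beta,2})\in L^{p/2}(\mathbb{B}_n,d\lambda)$, and the sufficiency half of \fullref{theorem1.2} finishes the job.

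The main obstacle is the sufficiency direction: the test function $k_z$ alone controls only one direction (namely $z$), so one cannot directly reconstruct $\rho(\zeta)$ from $\|(C_{u,\varphi}-C_{v,\psi})k_z\|_2$. The trick, already used for \fullref{theorem1.2}, is to replace $z$ by $\eta^j$ to gain access to the remaining $n-1$ tangential directions $z^j$, then apply \fullref{lemma2.2} to reassemble the pseudo-hyperbolic quantity. Once this is in place, everything else is a routine bookkeeping of integrals against $d\lambda$ and an appeal to the already-proved sufficiency in \fullref{theorem1.2}.
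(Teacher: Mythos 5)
Your proposal is correct and follows exactly the route the paper intends: the necessity of the integral condition is Lemma \ref{lemma2.5} applied to $(C_{u,\varphi}-C_{v,\psi})^*(C_{u,\varphi}-C_{v,\psi})$, and the sufficiency re-runs the necessity half of the proof of Theorem \ref{theorem1.2} (which from \eqref{equa4.2} onward uses only the $L^{p/2}(d\lambda)$-membership of $z\mapsto\|(C_{u,\varphi}-C_{v,\psi})k_z\|_2^2$ and $z\mapsto\|(C_{u,\varphi}-C_{v,\psi})k_z^{[R]}\|_2^2$, plus boundedness for the region $|z|\le t_0$) to reach the measure condition, and then invokes the sufficiency half of Theorem \ref{theorem1.2}. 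This is precisely what the paper means by ``from the proof of Theorem \ref{theorem1.2}, we conclude,'' so no further changes are needed.
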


When $p\geq 2$, by taking $v=0$, we conclude from Theorem 1.2 that $C_{u,\varphi}\in S_p(A_{\alpha}^2)$ if and only if
\begin{equation}\label{equa4.11}
	\int_{\mathbb{B}_n}\left[M_s(\mu_{u,\varphi})(z)\right]^{\frac{p}{2}}d\lambda(z)<\infty
\end{equation}
for some (or any) $s\in (0,1)$, where $\mu_{u,\varphi}=(|u|^2dv_{\alpha})\circ\varphi^{-1}$.

\begin{corollary}\label{corollary4.2}
	Let $\alpha>-1$, $p\geq 2$ and $a,b\in \mathbb{C}\backslash\{0\}$. Suppose $aC_{\varphi}+bC_{\psi}$ is bounded on $A_{\alpha}^2(\mathbb{B}_n)$. Neither $C_{\varphi}$ nor $C_{\psi}$ belongs to $S_p(A_{\alpha}^2)$. Then $aC_{\varphi}+bC_{\psi}\in S_p(A_{\alpha}^2)$ if and only if $a+b=0$ and $C_{\varphi}-C_{\psi}\in S_p(A_{\alpha}^2)$.
\end{corollary}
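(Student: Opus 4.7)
The plan is to reduce both directions to Theorem~\ref{theorem1.2} via the identification $aC_\varphi+bC_\psi=C_{a,\varphi}-C_{-b,\psi}$, treating the constant functions $u=a$ and $v=-b$ as symbols. Under this identification, and fixing any $\beta\geq 2(n+1+\alpha)$, the pull-back measures from the Introduction specialize to
\[
\omega_{\varphi,u}^{2}=|a|^{2}(\rho^{2}\,dv_\alpha)\circ\varphi^{-1},\qquad \omega_{\psi,v}^{2}=|b|^{2}(\rho^{2}\,dv_\alpha)\circ\psi^{-1},
\]
while both $\sigma_{\varphi,\beta}^{2}$ and $\sigma_{\psi,\beta}^{2}$ acquire the scalar factor $|a+b|^{2}$. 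The sufficiency is then immediate: when $a+b=0$ one has $aC_\varphi+bC_\psi=a(C_\varphi-C_\psi)$, and since $a\neq 0$ this lies in $S_p(A_\alpha^{2})$ whenever $C_\varphi-C_\psi$ does.

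For necessity, I will assume $aC_\varphi+bC_\psi\in S_p(A_\alpha^{2})$ and apply the necessity part of Theorem~\ref{theorem1.2} (which is valid precisely because $p\geq 2$) to obtain $M_{s}(\omega_{2}),\,M_{s}(\sigma_{\beta,2})\in L^{p/2}(\mathbb{B}_n,d\lambda)$ for some (equivalently, any) $s\in(0,1)$. The key observation I would use next is that the continuous function $h(\rho)=\rho^{2}+(1-\rho)^\beta$ is strictly positive on the compact interval $[0,1]$: it equals $1$ at both endpoints, and at every interior point at least one of the two non-negative summands is positive, so $h\geq c_{0}>0$ uniformly. Arguing by contradiction, suppose $a+b\neq 0$; then the pointwise bound
\[
|a|^{2}\rho(z)^{2}+|a+b|^{2}(1-\rho(z))^\beta\geq c_{0}\min(|a|^{2},|a+b|^{2})>0
\]
holds uniformly in $z\in\mathbb{B}_n$, and pushing forward by $\varphi$ and by $\psi$ respectively yields
\[
\omega_{\varphi,u}^{2}+\sigma_{\varphi,\beta}^{2}\gtrsim \mu_\varphi,\qquad \omega_{\psi,v}^{2}+\sigma_{\psi,\beta}^{2}\gtrsim \mu_\psi,
\]
where $\mu_\varphi=dv_\alpha\circ\varphi^{-1}$ and $\mu_\psi=dv_\alpha\circ\psi^{-1}$.

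Averaging these bounds would then force both $M_{s}(\mu_\varphi)$ and $M_{s}(\mu_\psi)$ into $L^{p/2}(\mathbb{B}_n,d\lambda)$, so the single-symbol Schatten criterion \eqref{equa4.11} would place $C_\varphi$ and $C_\psi$ in $S_p(A_\alpha^{2})$, contradicting the hypothesis that neither operator is Schatten-$p$. Hence $a+b=0$, and the identity $aC_\varphi+bC_\psi=a(C_\varphi-C_\psi)$ with $a\neq 0$ gives $C_\varphi-C_\psi\in S_p(A_\alpha^{2})$. I do not foresee any serious obstacle; the only substantive point is the elementary uniform lower bound on $h$, which is precisely what extracts the algebraic constraint $a+b=0$ from the Schatten membership.
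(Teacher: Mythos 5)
Your proposal is correct and follows essentially the same route as the paper: sufficiency via $aC_\varphi+bC_\psi=a(C_\varphi-C_\psi)$, and necessity by applying the $p\geq 2$ direction of Theorem~\ref{theorem1.2} to $C_{a,\varphi}-C_{-b,\psi}$ and combining the two integrability conditions to contradict \eqref{equa4.11} when $a+b\neq 0$. The only difference is cosmetic: you make explicit the uniform lower bound $\rho^2+(1-\rho)^\beta\geq c_0>0$ that the paper compresses into ``one could easily obtain.''
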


\begin{proof}
	The sufficiency is trivial. We only need to prove the necessity. 
	
	Assume $aC_{\varphi}+bC_{\psi}\in S_p(A_{\alpha}^2)$. Fix $s\in (0,1)$. By Theorem 1.2, we have
	\begin{equation}\label{equa4.12}
		\int_{\mathbb{B}_n}\left(\frac{\int_{\varphi^{-1}(E(z,s))}\rho(\zeta)^2dv_{\alpha}(\zeta)}{(1-|z|^2)^{n+1+\alpha}}\right)^{\frac{p}{2}}d\lambda(z)<\infty
	\end{equation}
	and 
	\begin{equation}\label{equa4.13}
		\int_{\mathbb{B}_n}\left(\frac{\int_{\varphi^{-1}(E(z,s))}(1-\rho(\zeta))^{2(n+1+\alpha)}|a+b|^2dv_{\alpha}(\zeta)}{(1-|z|^2)^{n+1+\alpha}}\right)^{\frac{p}{2}}d\lambda(z)<\infty.
	\end{equation} 
	If $a+b\neq 0$, then combining \eqref{equa4.12} and \eqref{equa4.13}, one could easily obtain
	\[\int_{\mathbb{B}_n}\left(\frac{v_{\alpha}(\varphi^{-1}(E(z,s)))}{(1-|z|^2)^{n+1+\alpha}}\right)^{\frac{p}{2}}d\lambda(z)<\infty.\]
	This shows that $C_{\varphi}\in S_p(A_{\alpha}^2)$ by \eqref{equa4.11}, which is a contradiction. Therefore, $a+b=0$ and $C_{\varphi}-C_{\psi}\in S_p(A_{\alpha}^2)$.
\end{proof}

We end this paper with a characterization for the Hilbert-Schmidtness of the difference $C_{u,\varphi}-C_{v,\psi}$ on $A_{\alpha}^2(\mathbb{B}_n)$, which generalized a result in \cite{AsWz}.

\begin{corollary}\label{corollary4.3}
	Let $\alpha>-1$. Suppose $u,v\in H(\mathbb{B}_n)$, $\varphi,\psi\in S(\mathbb{B}_n)$ and $C_{u,\varphi}-C_{v,\psi}$ is Hilbert-Schmidt if and only if 
	\begin{equation*}
		\int_{\mathbb{B}_n}\rho(z)^2\left(\frac{|u(z)|^2}{(1-|\varphi(z)|^2)^{n+1+\alpha}}+\frac{|v(z)|^2}{(1-|\psi(z)|^2)^{n+1+\alpha}}\right)dv_{\alpha}(z)<\infty
	\end{equation*}
	and
	\begin{equation*}
		\begin{split}
			\int_{\mathbb{B}_n}(1-\rho^2(z))^{n+1+\alpha}|u(z)-v(z)|^2
			\left(\frac{1}{(1-|\varphi(z)|^2)^{n+1+\alpha}}+\frac{1}{(1-|\psi(z)|^2)^{n+1+\alpha}}\right)dv_{\alpha}(z)<\infty.
		\end{split}
	\end{equation*} 
\end{corollary}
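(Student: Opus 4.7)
The plan is to compute $\|C_{u,\varphi}-C_{v,\psi}\|_{S_2}^2$ directly via an orthonormal basis and then establish a pointwise comparison between the resulting integrand and the sum $J(z)$ of the two integrands appearing in the statement.

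\emph{Step 1 (Hilbert--Schmidt formula).} Writing $T=C_{u,\varphi}-C_{v,\psi}$ and using Parseval's identity $\sum_j|(Te_j)(z)|^2=\|T^*K_z\|_2^2$ for any orthonormal basis $\{e_j\}$ of $A_\alpha^2(\mathbb{B}_n)$, together with the readily verified formula $(C_{u,\varphi})^*K_z=\overline{u(z)}\,K_{\varphi(z)}$, I would derive
\[\|T\|_{S_2}^2=\int_{\mathbb{B}_n}I(z)\,dv_\alpha(z),\]
where, with $N=n+1+\alpha$, $\xi=1-|\varphi(z)|^2$ and $\eta=1-|\psi(z)|^2$,
\[I(z)=\frac{|u(z)|^2}{\xi^N}+\frac{|v(z)|^2}{\eta^N}-2\operatorname{Re}\frac{u(z)\overline{v(z)}}{(1-\langle\varphi(z),\psi(z)\rangle)^N}.\]
The corollary then reduces to the pointwise comparison $I(z)\simeq J(z)$.

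\emph{Step 2 (Algebraic identity).} Raising \eqref{equa2.1} to the $N$-th power gives $|1-\langle\varphi,\psi\rangle|^{2N}=(\xi\eta)^N/(1-\rho^2)^N$, and a direct expansion (completing the square) then yields
\[I=\left|\frac{u}{\xi^{N/2}}-\frac{v\,\xi^{N/2}}{(1-\langle\psi,\varphi\rangle)^N}\right|^2+\frac{|v|^2}{\eta^N}\bigl(1-(1-\rho^2)^N\bigr)\]
together with its symmetric counterpart under the swap $(\varphi,u)\leftrightarrow(\psi,v)$. Since $1-(1-\rho^2)^N\simeq\rho^2$ on $[0,1]$ (by concavity of $1-(1-x)^N$ for $N\geq 1$), averaging the two identities captures the term $\rho^2(|u|^2/\xi^N+|v|^2/\eta^N)$, which is the first integrand of $J$. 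For the squared term, set $T_\varphi=\xi/(1-\langle\psi,\varphi\rangle)$ and note the key identity $|T_\varphi|^{2N}/\xi^N=(1-\rho^2)^N/\eta^N$, so that this term naturally carries the factor $(1-\rho^2)^N$; the bound $|1-T_\varphi|\leq\rho$ (obtained from the parallel component of $\Phi_\varphi(\psi)$) then yields $|1-T_\varphi^N|\lesssim\rho$ whenever $|T_\varphi|$ is bounded.

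\emph{Step 3 (Case analysis to complete $I\simeq J$).} Fix a small $r\in(0,1)$. If $\rho(z)<r$, then \eqref{equa2.3} forces $\xi\simeq\eta\simeq|1-\langle\varphi,\psi\rangle|$, hence $|T_\varphi|\simeq 1$ and $|1-T_\varphi^N|\lesssim\rho$. Apply the two decompositions
\[u-vT_\varphi^N=(u-v)+v(1-T_\varphi^N)=T_\varphi^N(u-v)+u(1-T_\varphi^N)\]
together with $|A+B|^2\leq 2(|A|^2+|B|^2)$ and $|A+B|^2\geq\tfrac{1}{2}|A|^2-|B|^2$ to obtain both $I\lesssim J$ and $J\lesssim I$, with the residual $|v|^2|1-T_\varphi^N|^2\lesssim\rho^2|v|^2$ absorbed by $\rho^2(|u|^2/\xi^N+|v|^2/\eta^N)$. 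If instead $\rho(z)\geq r$, the elementary bound $|1-\langle\varphi,\psi\rangle|\geq\max(\xi,\eta)/2$ gives $(1-\rho^2)^N/\xi^N\leq 4^N/\eta^N$ and its symmetric counterpart, whence $(1-\rho^2)^N|u-v|^2(\xi^{-N}+\eta^{-N})\lesssim|u|^2/\xi^N+|v|^2/\eta^N$; combined with the trivial upper bound $I\leq 2(|u|^2/\xi^N+|v|^2/\eta^N)$ and the universal estimate $\rho^2(|u|^2/\xi^N+|v|^2/\eta^N)\lesssim I$ (which follows from the identity of Step 2), the comparison closes because $\rho^2\geq r^2$.

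The main obstacle is the asymmetric regime $\xi\ll\eta$ (or vice versa), where $|T_\varphi|$ fails to be uniformly bounded; this is what forces the two-case split, and it also explains why the direct Hilbert--Schmidt computation yields the sharper factor $(1-\rho^2)^{n+1+\alpha}$ here, compared with the $(1-\rho)^{2(n+1+\alpha)}$ that one would obtain by specializing Theorem \ref{theorem1.2} at $p=2$.
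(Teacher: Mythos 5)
Your proposal is correct, and it takes a genuinely different route from the paper. The paper deduces the corollary from \fullref{theorem1.2} with $p=2$ and $\beta=2(n+1+\alpha)$: Hilbert--Schmidtness becomes $\int_{\mathbb{B}_n}[M_s(\omega_2)+M_s(\sigma_{\beta,2})]\,d\lambda<\infty$, which is converted into the two stated integrals by Fubini together with \eqref{equa2.2}--\eqref{equa2.3}, and the exponent is upgraded from $(1-\rho)^{2(n+1+\alpha)}$ to $(1-\rho^2)^{n+1+\alpha}$ by the same split into $G_r$ and its complement that you use in Step 3. You instead bypass \fullref{theorem1.2} entirely: the exact identity $\|T\|_{S_2}^2=\int_{\mathbb{B}_n}\|T^*K_z\|_2^2\,dv_\alpha(z)$ with $T^*K_z=\overline{u(z)}K_{\varphi(z)}-\overline{v(z)}K_{\psi(z)}$ reduces everything to the pointwise comparison $I\simeq J$, and your completed-square identity (which I checked: the cross terms match because $\xi^N/|1-\langle\psi,\varphi\rangle|^{2N}=(1-\rho^2)^N/\eta^N$ by \eqref{equa2.1}) together with $1-(1-\rho^2)^N\simeq\rho^2$ and the two-case analysis does close both directions. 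What your approach buys is a self-contained, elementary proof that avoids the Carleson-measure, Toeplitz and $R$-Berezin machinery behind \fullref{theorem1.2}, and it yields the stronger conclusion of a two-sided pointwise equivalence of integrands (in the spirit of Acharyya--Wu); what the paper's route buys is brevity once \fullref{theorem1.2} is in hand, and uniformity of method with the rest of Section 4. One small point to polish: since $N=n+1+\alpha$ need not be an integer, $T_\varphi^N$ must be taken with the principal branch, and the bound $|1-T_\varphi^N|\lesssim|1-T_\varphi|$ should be justified by the Lipschitz continuity of $w\mapsto w^N$ on the disk $\{|w-1|\leq r\}$, $r<1$ (which is exactly the regime $\rho<r$ where you invoke it, via $|1-T_\varphi|=|\langle\Phi_{\varphi}(\psi),\varphi\rangle|\leq\rho$), rather than by the geometric-sum factorization; this is cosmetic, not a gap.
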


\begin{proof}
	By Theorem \ref{theorem1.2}, we know that $C_{u,\varphi}-C_{v,\psi}$ is Hilbert-Schmidt on $A_{\alpha}^2(\mathbb{B}_n)$ if and only if 
	\[\int_{\mathbb{B}_n}\left[M_s(\omega_2)(z)+M_s(\sigma_{\beta,2})(z)\right]d\lambda(z)<\infty\]
	for some $s\in (0,1)$, where $\beta=2(n+1+\alpha)$.
	
	Note that $M_s(\omega_2)=M_s(\omega_{\varphi,u}^2)+M_s(\omega_{\psi.v}^2)$. Using \eqref{equa2.2}, \eqref{equa2.3}, and Fubini's Theorem, we get
	\begin{equation*}
		\begin{split}
			\int_{\mathbb{B}_n}M_s(\omega_{\varphi,u}^2)(z)d\lambda(z)
			& =\int_{\mathbb{B}_n}\frac{1}{(1-|z|^2)^{n+1+\alpha}}\left(\int_{\varphi^{-1}(E(z,s))}|u(\zeta)|^2\rho(\zeta)^2dv_{\alpha}(\zeta)\right)d\lambda(z)\\
			&=\int_{\mathbb{B}_n}\rho(\zeta)^2|u(\zeta)|^2\left(\int_{E(\varphi(\zeta),s)}\frac{1}{(1-|z|^2)^{n+1+\alpha}}d\lambda(z)\right)dv_{\alpha}(\zeta)\\
			& \simeq\int_{\mathbb{B}_n}\rho(\zeta)^2\frac{|u(\zeta)|^2}{(1-|\varphi(\zeta)|^2)^{n+1+\alpha}}dv_{\alpha}(\zeta),
		\end{split}
	\end{equation*}
	and
	\begin{equation*}
		\int_{\mathbb{B}_n}M_s(\omega_{\psi,v}^2)(z)d\lambda(z)\simeq\int_{\mathbb{B}_n}\rho(\zeta)^2\frac{|v(\zeta)|^2}{(1-|\psi(\zeta)|^2)^{n+1+\alpha}}dv_{\alpha}(\zeta).
	\end{equation*}
	Similarly,
	\begin{equation*}
		\begin{split}
			&\int_{\mathbb{B}_n}M_s(\sigma_{\beta,2})(z)d\lambda(z)\\
			&\simeq \int_{\mathbb{B}_n}(1-\rho^2(z))^{\beta}|u(z)-v(z)|^2
			\left(\frac{1}{(1-|\varphi(z)|^2)^{n+1+\alpha}}+\frac{1}{(1-|\psi(z)|^2)^{n+1+\alpha}}\right)dv_{\alpha}(z)\\
			&\leq \int_{\mathbb{B}_n}(1-\rho^2(z))^{n+1+\alpha}|u(z)-v(z)|^2
			\left(\frac{1}{(1-|\varphi(z)|^2)^{n+1+\alpha}}+\frac{1}{(1-|\psi(z)|^2)^{n+1+\alpha}}\right)dv_{\alpha}(z).
		\end{split}
	\end{equation*}
	
	On the other hand, for fixed $r\in (0,1)$, recall that $G_r=\{z\in\mathbb{B}_n:\rho(z)<r\}$. We write
	\begin{equation*}
		\begin{split}
			&\int_{\mathbb{B}_n}(1-\rho^2(z))^{n+1+\alpha}|u(z)-v(z)|^2
			\left(\frac{1}{(1-|\varphi(z)|^2)^{n+1+\alpha}}+\frac{1}{(1-|\psi(z)|^2)^{n+1+\alpha}}\right)dv_{\alpha}(z)\\
			&\quad =\int_{G_r}+\int_{\mathbb{B}_n\backslash G_r}(1-\rho^2(z))^{n+1+\alpha}|u(z)-v(z)|^2\\
			&\qquad\qquad\qquad\qquad\times\left(\frac{1}{(1-|\varphi(z)|^2)^{n+1+\alpha}}+\frac{1}{(1-|\psi(z)|^2)^{n+1+\alpha}}\right)dv_{\alpha}(z).
		\end{split}
	\end{equation*}
	Clearly,
	\begin{equation*}
		\begin{split}
			&\int_{G_r}(1-\rho^2(z))^{n+1+\alpha}|u(z)-v(z)|^2
			\left(\frac{1}{(1-|\varphi(z)|^2)^{n+1+\alpha}}+\frac{1}{(1-|\psi(z)|^2)^{n+1+\alpha}}\right)dv_{\alpha}(z)\\
			&\quad\lesssim \int_{\mathbb{B}_n}M_s(\sigma_{\beta,2})(z)d\lambda(z).
		\end{split}
	\end{equation*}
	And by \eqref{equa2.1}, we have
	\begin{equation*}
		\begin{split}
			&\int_{\mathbb{B}_n\backslash G_r}(1-\rho^2(z))^{n+1+\alpha}|u(z)-v(z)|^2
			\left(\frac{1}{(1-|\varphi(z)|^2)^{n+1+\alpha}}+\frac{1}{(1-|\psi(z)|^2)^{n+1+\alpha}}\right)dv_{\alpha}(z)\\
			&\quad =\int_{\mathbb{B}_n\backslash G_r}|u(z)-v(z)|^2\frac{(1-|\varphi(z)|^2)^{n+1+\alpha}+(1-|\psi(z)|^2)^{n+1+\alpha}}{|1-\langle \varphi(z),\psi(z)\rangle|^{2(n+1+\alpha)}}dv_{\alpha}(z)\\
			&\quad \lesssim\int_{\mathbb{B}_n\backslash G_r}\frac{(|u(z)|^2+|v(z)|^2)\rho(z)^2}{|1-\langle \varphi(z),\psi(z)\rangle|^{n+1+\alpha}}dv_{\alpha}(z)\lesssim \int_{\mathbb{B}_n}M_s(\omega_2)(z)d\lambda(z).
		\end{split}
	\end{equation*}
	Therefore, 
	\begin{equation*}
		\begin{split}
			&\int_{\mathbb{B}_n}\left[M_s(\omega_2)(z)+M_s(\sigma_{\beta,2})(z)\right]d\lambda(z)\\
			&\quad \simeq \int_{\mathbb{B}_n}\rho(z)^2\left(\frac{|u(z)|^2}{(1-|\varphi(z)|^2)^{n+1+\alpha}}+\frac{|v(z)|^2}{(1-|\psi(z)|^2)^{n+1+\alpha}}\right)dv_{\alpha}(z)\\
			&\quad\quad +\int_{\mathbb{B}_n}(1-\rho^2(z))^{n+1+\alpha}|u(z)-v(z)|^2\\
			&\qquad\qquad\qquad\qquad\times\left(\frac{1}{(1-|\varphi(z)|^2)^{n+1+\alpha}}+\frac{1}{(1-|\psi(z)|^2)^{n+1+\alpha}}\right)dv_{\alpha}(z).
		\end{split}
	\end{equation*}
	The proof is now complete.
\end{proof}


\noindent{\bf Acknowledgements}
The authors thank the referees for the comments and suggestions have led to the improvement of the paper. 
~\\

\noindent{\bf Declaration}
The authors declare that they have no conflicts of interest. No data was used for the research described in this article.




\end{document}